\newcommand{\Star}{\mathrm{st}}
\newcommand{\VertSet}{\mathrm{Vert}}
\newcommand{\fdsy@scale}{1.0}
\newcommand\fdsy@mweight@normal{Book}%
\newcommand\fdsy@mweight@small{Regular}%
\newcommand\fdsy@bweight@normal{Medium}%
\newcommand\fdsy@bweight@small{Bold}%
\DeclareFontFamily{U}{FdSymbolF}{}
\DeclareFontShape{U}{FdSymbolF}{m}{n}{
    <-7.1> s * [\fdsy@scale] FdSymbolF-\fdsy@mweight@small
    <7.1-> s * [\fdsy@scale] FdSymbolF-\fdsy@mweight@normal
}{}
\DeclareFontShape{U}{FdSymbolF}{b}{n}{
    <-7.1> s * [\fdsy@scale] FdSymbolF-\fdsy@bweight@small
    <7.1-> s * [\fdsy@scale] FdSymbolF-\fdsy@bweight@normal
}{}
\DeclareSymbolFont{delimiters}{U}{FdSymbolF}{m}{n}
\DeclareMathDelimiter{\lAngle}{\mathopen}{delimiters}{"92}{delimiters}{"92}
\DeclareMathDelimiter{\rAngle}{\mathclose}{delimiters}{"98}{delimiters}{"92}
\Crefname{ALC@unique}{Line}{Lines} 
\newcommand{\algorithmicprecondition}{ \textbf{Require:}}
\newcommand{\PreConditions}{\item[\algorithmicprecondition]}
\newcommand{\algorithmicpostcondition}{ \textbf{Side effect:}}
\newcommand{\PostConditions}{\item[\algorithmicpostcondition]}
\newcommand{\algorithmicinput}{ \textbf{Input:}}
\newcommand{\Input}{\item[\algorithmicinput]}
\newcommand{\dif}{\mathrm{d}}
\newcommand{\Dim}{{ \textup{D}}}
\newcommand{\norm}[1]{\left\lVert#1\right\rVert}
\crefname{hypothesis}{Hypothesis}{Hypotheses}
\title{A linear MARS method for \\
  three-dimensional interface tracking\thanks{
    Qinghai Zhang is the corresponding author.
    \funding{This work was supported by
      the National Natural Science Foundation of China (\#12272346)
      and the Fundamental Research Funds
      for the Central Universities 226-2025-00254.}}}
\author{Yunhao Qiu\thanks{School of Mathematical Sciences,
Zhejiang University, Hangzhou, Zhejiang, 310058, China.
 (\email{qiuyunhao@zju.edu.cn}, \email{qinghai@zju.edu.cn}).}
\and Qinghai Zhang\footnotemark[1] \footnotemark[2]
\thanks{Institute of Fundamental and Transdisciplinary Research,
Zhejiang University, Hangzhou, Zhejiang, 310058,
China.}}
\begin{document}

\maketitle

\begin{abstract}
  For explicit interface tracking in three dimensions,  
 we propose a linear MARS method that 
(a) represents the interface
 by a partially ordered set of glued surfaces
 and approximates each glued surface with a triangular mesh, 
(b) maintains an $(r,h,\theta)$-regularity on each triangular mesh 
 so that the distance between any pair of adjacent markers
 is within the range $[rh,h]$
 and no angle in any triangle is less than $\theta$, 
(c) applies to three-dimensional continua
 with arbitrarily complex topology and geometry,
(d) preserves topological structures and geometric features
 of moving phases under diffeomorphic and isometric flow maps, 
 and (e) achieves second-order and third-order accuracy
 in terms of the Lagrangian and Eulerian length scales, respectively.
Results of classic benchmark tests verify
 the effectiveness of the novel mesh adjustment algorithms
 in enforcing the $(r,h,\theta)$-regularity
 and demonstrate the high accuracy and efficiency 
 of the proposed linear MARS method.




\end{abstract}

\begin{keywords}
  interface tracking,
  mesh quality,
  $(r,h,\theta)$-regularity, 
  Yin sets,
  mapping and adjusting regular semianalytic sets (MARS)
\end{keywords}

\begin{MSCcodes}
76T30, 65D18 
\end{MSCcodes}

\section{Introduction}
\label{sec:intro}
Numerical simulations of three-dimensional (3D) multiphase flows
are crucial
in both scientific research and engineering applications.
A main challenge in these simulations is
how to accurately track interfaces
that separate different fluid phases.
Various numerical methods have been developed for interface tracking (IT),
among which the most popular are volume-of-fluid (VOF) methods 
\cite{HiNi81, Be92, RiKo98, Be02, XiHo05, LiRu06,
  WaYa12, QiLu19, AsMe20, DeJo20, CoSp21},
level set methods
\cite{OsSe88, SuSm94, SuFa98, MoKe98, SuPu00, OsFe03, OlKr05, MiGi07,
  ThGi15, MiGu16, GiFe18},
and front tracking methods 
\cite{UnTr92, JuTr96, TrBu01, HuaSt08, JeRo15, ShLu18}.

In level set methods, 
the interface is \emph{implicitly} approximated as the zero isocontour
of a signed distance function.
In VOF methods,
the interface is not only \emph{implicitly} described by volume
fractions of the tracked phase inside fixed control volumes
but also \emph{explicitly} represented as a cellwise function.
Within each time step, a VOF method consists of two substeps:
in the first reconstruction substep the explicit representation of
material regions is determined \emph{solely} from volume fractions
while in the second advection substep the volume fractions are advanced
 from the explicit representation and the velocity field.
An extension of VOF is the moment-of-fluid (MOF) method \cite{AhSh09},
 which reconstructs the interface not only by volume fractions but
 also by centroids.
There also exist hybrid methods \cite{SuPu00, YaJa06}
 that aim to combine the advantages of level set and VOF methods.

Level set methods and VOF methods
 avoid topological and geometric problems
 by converting them into numerically
 solving partial differential equations
 such as the scalar conservation law 
 or the advection equation.  
This simplification accounts for much of their successes
 during the past decades. 
On the other hand,
 this simplification also leads to excessive numerical diffusion
 along the interface
 and consequently it is difficult for these methods
 to preserve
 geometric features of the interface
 even under isometric flow maps. 
For example,
 the Zalesak disk rotation \cite{zal79}
 is a popular benchmark test where the flow map induced from
 the purely rotational velocity field is an isometry; 
 but it is well known that level set methods and VOF methods
 fail to preserve the geometric features in that
 they round sharp corners of the Zalesak disk to circular arcs. 
Another related disadvantage of these methods is
 their lack of means to preserve topological invariants
 of moving phases under homeomorphic flow maps. 
As mentioned in \Cref{sec:objective} and shown in \Cref{fig:vortex}, 
 the flow map of an ordinary differential equation (ODE)
 $\frac{\dif \mathbf{x}}{\dif t}=\mathbf{u}(\mathbf{x},t)$
 that admits a unique solution is homeomorphic
 and thus preserves the number of connected components
 of a tracked phase. 
The flow map of the deformation test \cite{LeVe96}
 is such an example, cf. (\ref{eq:deformationVel}).
However, 
 level set methods and VOF methods tend to generate flotsams,
 thus increasing the number of connected components; 
 see \cite[Fig. 6 and Fig. 8]{BoSc22}
 for an illustration.


In contrast,
 front tracking methods are much better in preserving
 topological invariants and geometric features of the moving phases,
 thanks to the \emph{explicit} interface representation,
 which usually takes the form of a triangular mesh
 that consists of a set of markers,
 a set of edges describing the connectivity of the markers,
 and a set of triangles formed by adjacent edges.
Also, the explicit representation incurs little numerical diffusion
 and leads to more accurate IT than implicit representations
 \cite{GiFe18}.
 
Nonetheless,
 current front tracking methods have a number of serious limitations.

First,
 the interface markers may evolve
 to become unevenly distributed,
 leading to very long/short edges
 and small angles of triangles.
The long edges deteriorate the accuracy
 while the small angles 
 make the discrete system ill-conditioned.
To ensure an acceptable level of accuracy and stability, 
 certain regularity conditions,
 such as the $(r, h, \theta)$-regularity in \Cref{def:repinv}, 
 must be imposed on the evolving triangular mesh at each time step. 
%
Most existing 3D front tracking methods
 \cite{UnTr92, TrBu01, HuaSt08, JeRo15, ShLu18}
 primarily rely on elementary operations
 such as edge splitting, edge collapse, and edge flipping.
However, as discussed in \Cref{subsec:elementary}
 and shown in \cref{fig:swapfail} and
 \cref{fig:armadillocomparison}, 
 the improvement of the mesh quality
 offered by these elementary mesh adjustments 
 is rather limited 
 and cannot be satisfactory for all scenarios.

Second,
 front tracking methods lack rigorous foundations 
 such as a theory for modeling topological structures
 and geometric features of 3D continua. 
Can front tracking methods be applied to any 3D continua with arbitrarily
 complex topology and geometry?
What is the relationship between a 3D continuum
 and its ``front''? 
Will a triangular mesh \emph{suffice} as a data structure
 for representing any 3D continuum?
As far as we know,
 these fundamental questions have never been answered
 nor even mentioned in the literature of front tracking methods. 

Third,
 current front tracking algorithms are of limited accuracy,
 flexibility, and efficiency.
Due to its restriction that
 each Eulerian cell contains no more than
 one linear frontal segment \cite{UnTr92}, 
 the front tracking method is at best second-order accurate
 with no subgrid resolution. 
When two or more frontal segments are present in one cell,
 they must be merged or eliminated; 
 these surgical operations might be inconsistent
 with the bona fide physical processes 
 and may result in large errors.

Fourth, 
 it is sometimes important
 to efficiently obtain global topological invariants
 such as the number of connected components of a fluid phase
 and the number of holes inside the fluid; 
 for example, the number of bubbles per unit volume in a bubbly flow
 is a crucial characterization of the fluid properties.
However, it is highly difficult for front tracking methods,
 as well as level set methods and VOF methods, 
 to efficiently compute these global topological invariants. 

The above discussions motivate questions as follows.
\begin{enumerate}[({Q}-1)]
\item Based on the complete classification 
  of 3D continua in \mbox{\cite[Thm. 6.3]{ZhQi}}, 
  can we 
  uniquely and efficiently represent any 3D continuum
  with arbitrarily complex topology?
  In particular,
  can the global topological invariants of any 3D continuum
  be computed with optimal complexity, i.e., in O(1) time?
\item For a low-quality triangular mesh, 
  can we adjust the mesh efficiently and effectively
  so that it satisfies the $(r,h,\theta)$-regularity
  in \Cref{def:repinv}?
\item In light of the answers to (Q-1) and (Q-2), 
  can we develop an accurate and efficient MARS method
  for 3D explicit IT?
\end{enumerate}

In this paper, we give positive answers to all above questions.
Different from current methods that
 avoid topology and geometry in IT, 
 our primary principle is to \textit{tackle topological
 and geometric problems with tools in topology and geometry.}
Previously, we have designed the Yin space \cite{ZhLi20, ZhQi}
 as a mathematical model for two-dimensional (2D) and 3D continua,
 proposed the MARS
 (mapping and adjusting regular semianalytic sets) framework 
 for analyzing explicit IT methods \cite{ZhFo16},
 and developed fourth- and higher-order MARS methods
 for 2D two-phase IT \cite{Zh18}, 2D multiphase IT \cite{TaQi25}, 
 and 2D mean curvature flows \cite{HuLi25}.
We do not consider our MARS methods as a front tracking method,
 but rather a new family of methods in its own right.
In this work, we propose the first MARS method for 3D IT. 
 
In \Cref{sec:yinsets},
 we briefly review Yin sets as a mathematical model of 3D continua
 to lay the theoretical foundation of this work,
In particular,  
 we answer \mbox{(Q-1)}
 by \Cref{sec:sign-yin-space}.
In \Cref{sec:objective},
 we clearly state the IT problem to be solved.
In \Cref{sec:framework}, we review 
 the MARS framework to prepare for later sections.
In \Cref{subsec:3DlinearMARS}, 
 we answer (Q-3)
 by proposing a 3D linear MARS method, 
 of which the main novelty
 is a three-step sequence of mesh adjustment algorithms.

To resolve (Q-2), 
 we first adjust an ill-shaped triangular mesh
 by three elementary operations 
 discussed in \Cref{subsec:elementary}.
When they fail, we set up a spring system
 as the proxy of the triangular mesh
 and move the vertices
 to minimize the potential energy of the spring system
 so that edge lengths of the mesh
 become more evenly distributed;
 see \Cref{subsec:relocate}.
When this relocation algorithm fails again,
 our last resort is to replace a neighborhood of ill-shaped triangles
 with a newly generated local patch;  
 see more details in \Cref{subsec:regenerate}. 
This three-step sequence of mesh adjustment algorithms
 not only enforces
 the $(r_{\mathrm{tiny}},h_{L},\theta)$-regularity in IT,
 but also improves the smallest interior angle
 of a generic triangular mesh.
As such, it is the most important contribution of this work.

In \Cref{sec:tests},
 results of benchmark tests
 verify the effectiveness of the three-step sequence
 of mesh adjustment algorithms, 
 confirm the high accuracy and efficiency 
 of the proposed method, 
 and demonstrate its advantages over level set and VOF methods
 in preserving topological structures
 and geometric features of the moving phases.
In \Cref{sec:conclusions},
 we conclude this paper with future research prospects.



\section{Yin sets for modeling 3D continua}
\label{sec:yinsets}
In \Cref{sec:yin-sets}, we sum up
 relevant concepts and main conclusions in \cite{ZhQi}
 on the unique boundary representation of Yin sets. 
In \Cref{sec:triangular-mesh-as}, 
 the boundary of a Yin set is approximated 
 with a geometric 2-simplicial complex, 
 bridging the theory in \cite{ZhQi}
 and the algorithms in this work.
The significance of Yin sets for IT
 is summarized in \Cref{sec:sign-yin-space}.


\subsection{Uniquely representing Yin sets via glued surfaces}
\label{sec:yin-sets}

In a topological space ${\mathcal X}$,
 the \emph{complement} of a subset ${\mathcal P}\subseteq {\mathcal X}$,
 written ${\mathcal P}'$,
 is the set ${\mathcal X}\setminus {\mathcal P}$.
The \emph{closure} of a set ${\mathcal P}\subseteq{\mathcal X}$,
 written $\overline{\mathcal P}$,
 is the intersection of all closed 
 supersets of ${\mathcal P}$.
The \emph{interior} of ${\mathcal P}$, written ${\mathcal P}^{\circ}$,
 is the union of all open subsets of ${\mathcal P}$.
The \emph{exterior} of ${\mathcal P}$,
 written ${\mathcal P}^{\perp}:= {\mathcal P}^{\prime\circ}
 :=({\mathcal P}')^{\circ}$,
 is the interior of its complement.
A point $\mathbf{x}\in {\mathcal X}$ is
 a \emph{boundary point} of ${\mathcal P}$
 if $\mathbf{x}\not\in {\mathcal P}^{\circ}$
 and $\mathbf{x}\not\in {\mathcal P}^{\perp}$.
The \emph{boundary} of ${\mathcal P}$, written $\partial {\mathcal P}$,
 is the set of all boundary points of ${\mathcal P}$.
It can be shown that
 ${\mathcal P}^{\circ}={\mathcal P}\setminus \partial {\mathcal P}$
 and
 $\overline{\mathcal P}= {\mathcal P}\cup \partial {\mathcal P}$.

A \emph{regular open} set 
 is an open set ${\cal P}$ satisfying
 ${\mathcal P}=\overline{\mathcal P}^{\circ}$
 while a \emph{regular closed} set
 is a closed set ${\cal P}$
 satisfying ${\mathcal P}=\overline{\mathcal P^{\circ}}$.
Regular sets, open or closed,
 capture a key feature of continua
 that their regions are free of lower-dimensional elements
 such as isolated points and curves in $\mathbb{R}^2$
 and dangling faces in $\mathbb{R}^3$.
In this work, we employ regular open sets instead of regular closed sets
 because the latter does not have a \emph{unique} boundary
 representation
 \cite{ZhQi}
 while the former does, cf. \Cref{thm:bdryrep}. 
 
The intersection of two regular sets 
 might contain an infinite number of connected components
 \cite[eqn (3.1)]{ZhLi20},
 making it difficult to perform Boolean algorithms on regular sets 
 since no computer has an infinite amount of memory. 
This difficulty is resolved by requiring each regular set
 to be simultaneously a \emph{semianalytic} set,
 i.e., a set
  ${\cal P}\subseteq \mathbb{R}^{\Dim}$
 in the universe of 
 a finite Boolean algebra formed from the sets
 \mbox{${\cal X}_i
 =\left\{{\mathbf{x}}\in\mathbb{R}^{\Dim}:g_i({\mathbf{x}})\geq 0\right\}$}
 where each function $g_i:\mathbb{R}^{\Dim}\to\mathbb{R}$
 is analytic.
Intuitively,
 $\partial {\cal P}$ is piecewise smooth
 so that ${\cal P}$ can be described by a finite number of entities.

\begin{definition}[Yin space \cite{ZhFo16, ZhLi20}]
  \label{def:yinspace}
  A \emph{Yin set} $\mathcal{Y}\subset \mathbb{R}^{\Dim}$ is a regular
  open semianalytic set whose boundary is bounded.
  All Yin sets form the \emph{Yin space} $\mathbb{Y}$.
\end{definition}

Hereafter we focus 
 on the case of $\Dim=3$.
Instead of modeling the interface as a 2-manifold,
 we regard it as the boundary of a continuum represented by a Yin set.
Although this work does not address topological changes,
 the theory of Yin sets lays a solid foundation
 for future research on topological changes.

\begin{definition}
  \label{def:surface}
  A \emph{surface} is a piecewise-smooth orientable compact 2-manifold.
\end{definition}


The orientation of a surface ${\cal S}$ at $p\in {\cal S}$ is indicated
by a basis $(\mathbf{v}_1,\mathbf{v}_2)$ of the tangent space $T_p{\cal S}$,
we regard $\mathbf{v}_1$ and $\mathbf{v}_2$ as vectors in $\mathbb{R}^3$
by the embedding of $T_p{\cal S}$ in $T_p\mathbb{R}^3$
and define the \emph{induced normal vector} $\mathbf{n}$ at $p\in{\cal S}$
as the vector induced from the right-hand rule
to form a basis $(\mathbf{v}_1,\mathbf{v}_2, \mathbf{n})$ of $\mathbb{R}^3$. 
The surface ${\cal S}$ is \emph{positively oriented}
if $\mathbf{n}$ 
always points from its bounded complement 
to its unbounded complement;
otherwise it is \emph{negatively oriented}.

A \emph{one-dimensional (1D) CW complex}
 is the attaching space formed by gluing a number of disjoint simple curves
 to a finite number of points,
 with the two endpoints of each curve
 identified with one or two points.
In particular, a finite number of isolated points
 also form a 1D CW complex.

\begin{definition}
  \label{def:gluedSurface}
  A \emph{glued surface} is the quotient space
  of a surface such that 
  (i) the quotient map glues the surface
  along a 1D CW complex
  and (ii) the complement of the glued surface in $\mathbb{R}^3$
  has exactly two connected components,
  one bounded and the other unbounded.
\end{definition}

In light of \Cref{def:gluedSurface},
 the orientation of a glued surface
 is naturally given by that of the original surface.
 Then we have
 
\begin{definition}
\label{def:internalComplementOfGluedSurface}
  The \emph{internal complement of an oriented glued surface}
  $G\subset \mathbb{R}^3$,
 written $\mathrm{int}(G)$,
 is its bounded complement if $G$ is positively oriented;
 otherwise it is the unbounded complement. 
\end{definition}

Let $\mathcal{S}_1$ and $\mathcal{S}_2$ be
 two glued surfaces; 
 they are \emph{disjoint}
 if $\mathcal{S}_1\cap \mathcal{S}_2=\emptyset$. 
Otherwise a maximally connected component $\gamma$ of
 $\mathcal{S}_1\cap \mathcal{S}_2$ 
 is called an \emph{improper intersection}
 of $\mathcal{S}_1$ and $\mathcal{S}_2$
 if $(\mathcal{S}_2\cap \mathcal{N}(\gamma)) \backslash \gamma$
 is contained in a single component of
 $\mathcal{N}(\gamma) \backslash \mathcal{S}_{1}$
 in a sufficiently small open neighborhood
 $\mathcal{N}(\gamma)\subset \mathbb{R}^3$;
 otherwise $\gamma$ is a \emph{proper intersection}.
See \mbox{\cite[Fig. 3]{ZhQi}} for an illustration. 
Two glued surfaces are \emph{almost disjoint}
 if they have no proper intersections
 and their improper intersection
 is a 1D CW complex.

A glued surface $G_{k}$ is said
 to \emph{include} another glued surface $G_{\ell}$,
 written $G_{k} \ge G_{\ell}$ or $G_{\ell} \le G_k$,
 if the bounded complement of $G_{\ell}$
 is a subset of that of $G_k$.
If $G_k$ includes $G_{\ell}$ and $G_k \neq G_{\ell}$,
 we write $G_k > G_{\ell}$ or $G_{\ell} < G_k$.
In a partially ordered set (poset) $\mathcal{G}$ of glued surfaces
 with inclusion as the partial order,
 $G_k$ is said to \emph{cover} $G_{\ell}$ in $\mathcal{G}$,
 written $G_{k} \succ G_{\ell}$ or $G_{\ell}\prec G_{k}$, 
 if $G_{\ell}< G_{k}$ and no element $G\in {\mathcal G}$
 satisfies $G_{\ell}< G<G_{k}$.

\begin{theorem}[Unique boundary representation of connected 3D Yin sets \cite{ZhQi}]
  \label{thm:connectedYinsetrep}
  The boundary of any connected Yin set
  $\mathcal{Y} \neq \emptyset, \mathbb{R}^3$
  can be uniquely oriented and partitioned into a finite poset
  $\mathcal{G}_{\partial \mathcal{Y}}=\{G_j\subset \partial {\cal Y}\}$
  of pairwise almost disjoint oriented glued surfaces
  so that the following boundary representation of $\mathcal{Y}$
  is unique: 
  \begin{equation}
    \label{eq:boundaryRep3DYinSets}
    \mathcal{Y} = \mathop{\bigcap}\nolimits_{G_j \in\, \mathcal{G}_{\partial \mathcal{Y}}}
    \mathrm{int}(G_j),
  \end{equation}
  where $\mathrm{int}(G_j)$ is the internal complement
  of $G_j$ in \Cref{def:internalComplementOfGluedSurface}. 
Furthermore, the poset $\mathcal{G}_{\partial \mathcal{Y}}$
must be one of the following two types,
\begin{equation}
  \label{equ:decomptypes}
  \renewcommand{\arraystretch}{1.1}
  \left\{
    \begin{array}{ll}
      {\mathcal G}^-
      =\{G^-_1, G^-_2, \ldots, G^-_{n_-}\}
      & \text{where}\ n_-\ge 1,
      \\
      {\mathcal G}^+
      =\{G^+,G^-_1, G^-_2, \ldots, G^-_{n_-}\}
      & \text{where}\ n_-\ge 0,
    \end{array}
  \right.
\end{equation}
where all $G^-_j$'s are negatively oriented and 
mutually incomparable with respect to inclusion.
For ${\mathcal G}^+$,
 we have $G_j^- \prec G^+$, 
 i.e., $G^+$ covers $G^-_j$
 for each $j=1,2,\ldots,n_-$.
  \label{thm:bdryrep}
\end{theorem}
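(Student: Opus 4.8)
The plan is to reconstruct the poset $\mathcal{G}_{\partial\mathcal{Y}}$ directly from the connected components of $\mathbb{R}^3\setminus\partial\mathcal{Y}$, and then to read off the orientations, the representation formula, and the two structural types from the combinatorics of those components. First I would record the ambient structure of the boundary: since $\mathcal{Y}$ is a regular open semianalytic set with bounded boundary (\Cref{def:yinspace}), $\partial\mathcal{Y}$ is a compact two-dimensional semianalytic set, and because $\mathcal{Y}$ is open and regular we have the disjoint decomposition $\mathbb{R}^3\setminus\partial\mathcal{Y}=\mathcal{Y}\sqcup\mathcal{Y}^{\perp}$ with $\mathcal{Y}^{\perp}=\mathbb{R}^3\setminus\overline{\mathcal{Y}}$. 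Connectedness of $\mathcal{Y}$ makes it a single component of this complement, while semianalyticity forces $\mathcal{Y}^{\perp}$ to have only finitely many components $V_1,\dots,V_m$. Exactly one set in $\{\mathcal{Y},V_1,\dots,V_m\}$ is unbounded, since $\partial\mathcal{Y}$ is bounded; this dichotomy, namely whether the unbounded region is some complementary region $V_0$ or is $\mathcal{Y}$ itself, is precisely what will later separate the types $\mathcal{G}^+$ and $\mathcal{G}^-$.

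The heart of the argument is a lemma asserting that each frontier $\partial V_j$ is a glued surface in the sense of \Cref{def:gluedSurface}. Here I would use the connectedness of $\mathcal{Y}$ decisively: the two open sides of $\partial V_j$ are $V_j$ and $\mathbb{R}^3\setminus\overline{V_j}$, and the latter is connected because every point of $\partial V_k$ lies in $\partial\mathcal{Y}$ and hence has $\mathcal{Y}$-points arbitrarily nearby, so every other complementary region abuts the single connected set $\mathcal{Y}$ and lies in its component; were $\mathbb{R}^3\setminus\partial V_j$ to acquire a third component, this would force $\mathcal{Y}$ itself to disconnect. Equivalently, an Alexander-duality count in the one-point compactification $S^3$ reduces the ``exactly two components'' condition to $\widetilde{H}^2(\partial V_j)\cong\mathbb{Z}$. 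I would then resolve the self-touching locus of $\partial V_j$, a semianalytic set of dimension at most one, by normalization, exhibiting $\partial V_j$ as the quotient of a genuine piecewise-smooth orientable compact $2$-manifold glued along a $1$D CW complex. Orienting $G_j:=\partial V_j$ by the induced normal pointing away from $\mathcal{Y}$ places $\mathcal{Y}$ inside $\mathrm{int}(G_j)$ (\Cref{def:internalComplementOfGluedSurface}), whereupon the representation \eqref{eq:boundaryRep3DYinSets} follows from a pointwise side-test: a point lies in $\mathcal{Y}$ iff it lies on the $\mathcal{Y}$-side of every $\partial V_j$, i.e.\ iff it lies in the open set $\bigcap_j\mathrm{int}(G_j)$, which therefore equals $\mathcal{Y}$.

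Finally I would extract the poset structure. If $\mathcal{Y}$ is bounded, the unique unbounded region $V_0$ yields a glued surface whose internal complement is its bounded side containing $\mathcal{Y}$, hence a positively oriented $G^+$; every remaining $V_j$ is bounded, its internal complement is the unbounded side, so each $G_j^-$ is negatively oriented. Because inclusion is governed by the bounded complements, which are exactly the regions $V_j$, distinct bounded regions are disjoint and the $G_j^-$ are mutually incomparable, while $V_j\subset\mathbb{R}^3\setminus\overline{V_0}$ gives $G_j^-<G^+$ with nothing strictly between, i.e.\ $G_j^-\prec G^+$; this is the type $\mathcal{G}^+$. If instead $\mathcal{Y}$ is unbounded, no complementary region is unbounded, all glued surfaces are negatively oriented and mutually incomparable, and there is at least one since $\mathcal{Y}\neq\mathbb{R}^3$; this is the type $\mathcal{G}^-$. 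Pairwise almost-disjointness is then immediate: distinct $\partial V_j$ bound disjoint open regions and so cannot cross transversally, a proper intersection would push part of one region into the other, confining their intersection to the shared at-most-one-dimensional pinch locus on $\partial\mathcal{Y}$. Uniqueness follows because the components $V_j$, their frontiers, and the orientations forced by $\mathcal{Y}\subseteq\mathrm{int}(G_j)$ are all intrinsic to $\mathcal{Y}$, leaving no freedom.

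I expect the main obstacle to be the glued-surface lemma of the second paragraph, namely proving rigorously that $\partial V_j$ separates $\mathbb{R}^3$ into exactly two components and simultaneously admits a presentation as a surface glued along a $1$D CW complex. Both the separation count and the normalization of the singular locus hinge on controlling the local structure of $\partial\mathcal{Y}$ at its self-touching points, and it is here that regularity, semianalyticity, and the global connectedness of $\mathcal{Y}$ must be combined most carefully.
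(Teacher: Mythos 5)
The paper does not prove this theorem: it is imported wholesale from \cite{ZhQi} and stated without argument, so there is no in-paper proof to compare yours against. On its own merits, your strategy is the natural one and is consistent with everything the paper uses downstream: you take the finitely many components $V_1,\dots,V_m$ of $\mathcal{Y}^{\perp}=\mathbb{R}^3\setminus\overline{\mathcal{Y}}$, set $G_j:=\partial V_j$, orient each $G_j$ by the normal pointing away from $\mathcal{Y}$, and read the dichotomy $\mathcal{G}^{\pm}$ off the location of the unbounded component (compare the remark in \Cref{sec:sign-yin-space} that the number of negatively oriented glued surfaces equals the Betti number $B_2$ of $\overline{\mathcal{Y}}$). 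The derivation of \eqref{eq:boundaryRep3DYinSets} from the partition $\mathbb{R}^3=\mathcal{Y}\sqcup\partial\mathcal{Y}\sqcup V_1\sqcup\cdots\sqcup V_m$, the incomparability of the $G_j^-$ from disjointness of the bounded $V_j$, and the covering relation $G_j^-\prec G^+$ are all sound given the lemma you isolate.

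Two gaps remain, one of which you name yourself. First, the glued-surface lemma: your connectedness argument for $\mathbb{R}^3\setminus\overline{V_j}$ joins each $V_k$ ($k\neq j$) to $\mathcal{Y}$ through a small ball centered at a point of $\partial V_k\setminus\partial V_j$, which silently assumes that set is nonempty; excluding $\partial V_k\subseteq\partial V_j$ (and, likewise, establishing almost-disjointness and the normalization of the pinch locus) requires the semianalytic stratification fact that the set of points at which three or more of the regions $\mathcal{Y},V_1,\dots,V_m$ simultaneously accumulate is at most one-dimensional. This is exactly where regularity and semianalyticity must be combined, and it is not routine. Second, uniqueness: you only show that \emph{your} construction is canonical, not that an \emph{arbitrary} finite poset of pairwise almost disjoint oriented glued surfaces partitioning $\partial\mathcal{Y}$ and satisfying \eqref{eq:boundaryRep3DYinSets} must coincide with $\{\partial V_j\}$ with the stated orientations; that step needs an argument that any competing $G$ is forced to be the frontier of a single complementary component. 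For both points the full argument is in \cite{ZhQi}, which is where the paper itself defers.
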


\Cref{thm:bdryrep} can be applied
 to each connected component of a Yin set ${\cal Y}$
 straightforwardly
 to obtain the unique representation of ${\cal Y}$;
 see \cite[Corollary 7.11]{ZhQi}.




\subsection{Approximating glued surfaces
  with triangular meshes}
\label{sec:triangular-mesh-as}

The points $\mathbf{x}_0, \mathbf{x}_1, \ldots, \mathbf{x}_m$
 in $\mathbb{R}^n$
 are \emph{affine independent} if
 $\{\mathbf{x}_1-\mathbf{x}_0, \mathbf{x}_2-\mathbf{x}_0, \ldots,
 \mathbf{x}_m-\mathbf{x}_0\}$
 is a linearly independent subset of $\mathbb{R}^n$.  
An $m$-\emph{simplex} $\mathbf{X}$
 or a \emph{simplex with dimension $m$}
 is the convex hull of $m+1$ affine independent
 points
 $\mathbf{x}_0, \mathbf{x}_1, \ldots, \mathbf{x}_m$, 
 written
 \mbox{$\mathbf{X} = [\mathbf{x}_0, \mathbf{x}_1, \ldots, \mathbf{x}_m]
 := \mathrm{conv} \{\mathbf{x}_0, \mathbf{x}_1, \ldots,
 \mathbf{x}_m\}$}
 and $\dim \mathbf{X}=m$; 
 then the \emph{vertex set of the $m$-simplex} $\mathbf{X}$
 is the affine independent set,
 written 
 $\mathrm{Vert}(\mathbf{X})
 = \{\mathbf{x}_0, \mathbf{x}_1, \ldots, \mathbf{x}_m\}$.
A \emph{face of a simplex} $\mathbf{X}$ is 
 a simplex $\mathbf{V}$ satisfying
 $\mathrm{Vert}(\mathbf{V})\subseteq \mathrm{Vert}(\mathbf{X})$.
 
A (geometric) \emph{simplicial complex} $K$
 is a finite set of simplexes in $\mathbb{R}^n$ such that
\begin{enumerate}[(a)]\itemsep0em
\item $\sigma\in K$ implies
  that all of its faces belong to $K$;
\item $\sigma,\tau\in K$ implies
  that $\sigma\cap \tau$ is either empty
  or a single common face of $\sigma$ and $\tau$.
\end{enumerate}

The \emph{dimension of a simplicial complex} $K$ is
 $\dim K := \sup_{\sigma\in K} \{\dim \sigma\}$. 
A \emph{simplicial $k$-complex} is a simplicial complex
 with $k=\dim K$.
The \emph{vertices}, \emph{edges}, and \emph{triangles}
 of a simplicial $2$-complex
 are its 0-simplices, 1-simplices, and 2-simplices, respectively.

The \emph{underlying space of a simplicial complex} $K$, 
 written $|K|$,
 is the union of all simplexes in $K$, 
 i.e., $|K| := \cup_{\sigma\in K} \sigma$. 
A topological space is \emph{triangularizable}
 or \emph{can be triangulated}
 if it is homeomorphic to $|K|$
 for some simplicial 2-complex $K$.
For example,
 it is well known that
 any compact orientable 2-manifold can be triangulated \cite{DoMo68}.
By \Cref{def:gluedSurface}, 
 a glued surface can also be triangulated. 
Then it follows from \Cref{thm:bdryrep} that 
 for any 3D Yin set ${\cal M}$
 there exists a simplicial 2-complex $K$
 such that $\partial{\cal M}$ is homeomorphic
 to $|K|$ and can be well approximated by $|K|$.

\begin{definition}
 \label{def:triangularMesh}   
 A \emph{triangular mesh} is a simplicial 2-complex $K$
 where each triangle is oriented so that,
 according to the right-hand rule, 
 the ordering of the three vertices 
 yields the outward normal direction of $|K|$ at this triangle.
\end{definition}

The (closed) \emph{star of a vertex $p$ in a triangular mesh $K$} 
 is a subset of $|K|$ defined as 
 $\overline{\Star}(p)
 := \bigcup_{\sigma\in K, p\in \VertSet(\sigma)} \sigma$
 and the \emph{link of $p$ in $K$}
 is the set containing every face $\tau\in K$
 such that $\tau$ and $p$ are disjoint
 and there exists a simplex in $K$
 having both $p$ and $\tau$ as its faces.
As the crucial difference between a surface and a glued surface, 
 the star of any vertex in the triangular mesh for the former
 must be homeomorphic to the unit disk
 while that for the latter needs not be so.

\subsection{Theoretical
   and algorithmic significance of Yin sets for IT}
\label{sec:sign-yin-space}


First,
\Cref{thm:bdryrep} furnishes
 a unique and efficient boundary representation of 3D continua
 with arbitrarily complex topology and geometry.
By the one-to-one correspondence between Yin sets 
 and posets of glued surfaces,
 the representation of any 3D continuum ${\cal Y}$ is reduced
 to those of the glued surfaces that constitute $\partial {\cal Y}$. 

Second,
 by \Cref{sec:triangular-mesh-as},
 there is no loss of generality
 in approximating a glued surface
 with a triangular mesh.
Hence in this work,
 any 3D Yin set ${\cal Y}$ is represented
 as a poset of triangular meshes,
 each of which approximates a glued surface in $\partial{\cal Y}$.
For each connected Yin set,
 the data structure for its representation
 corresponds to the two cases in (\ref{equ:decomptypes}). 
 
Lastly, it can be deduced from \Cref{thm:bdryrep}
 that the number of its connected components of a bounded Yin set
 equals the number of its positively oriented glued surfaces
 and that the number of holes inside a connected Yin set ${\cal Y}$, 
 i.e., Betti number $B_{2}$ of $\overline{{\cal Y}}$, 
 equals the number of negatively oriented glued surfaces in
 \cref{equ:decomptypes}. 
Furthermore,
 these characteristic numbers of global topology
 can be extracted in $O(1)$ time.




\section{The  IT problem}
\label{sec:objective}
Yin sets only model stationary continua.
To track a moving continuum,
 we follow the approach in \cite{ZhFo16}
 to formulate the IT problem as
 the determination of the image
 of the initial Yin set under the action of a flow map. 

\begin{definition}
  \label{def:it}
  The \emph{IT problem} is the determination of a Yin set
  $\mathcal{M}(T)\, \in \,\mathbb{Y}$ from its initial condition
  $\mathcal{M}(t_0)\,\in\, \mathbb{Y}$ under passive advection of
  a nonautonomous ordinary differential equation (ODE)
  $\frac{\dif \mathbf{x}}{\dif t} = \mathbf{u}(\mathbf{x}, t)$
  where $t$ is the time,
  $\mathbf{x}(t) \, \in \, \mathcal{M}(t)$
  the position of a passively advected Lagrangian particle in the Yin set,
  and $\mathbf{u}$ the velocity field that is continuous in time
  and Lipschitz continuous in space.
\end{definition}

The above ODE
 $\frac{\dif \mathbf{x}}{\dif t} = \mathbf{u}(\mathbf{x}, t)$
 admits a unique solution for any initial time and initial position.
This uniqueness furnishes a function
$\phi \, : \, \mathbb{R}^{\Dim} \times \mathbb{R}\times \mathbb{R}
\, \to\, \mathbb{R}^{\Dim}$,
\begin{equation}
  \label{equ:flowmap}
  \begin{array}{l}
  \phi_{t_0}^{\tau}(p(t_0))\, :=\, p(t_0+\tau) \,=\, p(t_0) +
  \int^{t_0+\tau}_{t_0} \mathbf{u}(p(t), t)\, dt,  \\
  \end{array}
 \end{equation}
 where the three variables $p(t_0)$, $t_0$, and $\tau$
 are the initial position, the initial time, and the time increment,
 respectively.
If both the initial time and the time increment are fixed, 
 the flow map $\phi$ is a diffeomorphism.



\section{The MARS framework}
\label{sec:framework}

On top of its topological equivalence classes and Boolean algebra \cite{ZhLi20}, 
 the Yin space is further augmented to a metric space
 by the function
 $d \,: \, \mathbb{Y} \times \mathbb{Y} \rightarrow [0, +\infty)$
 given by 
\begin{equation}
  \label{equ:metric}
  \forall \mathcal{P}, \mathcal{Q} \in \mathbb{Y},\quad
  d(\mathcal{P}, \, \mathcal{Q})\, := \, \Vert \mathcal{P} \oplus
  \mathcal{Q} \Vert, 
\end{equation}
where the \emph{volume of a Yin set} $\mathcal{Y}$ is defined as 
 $\Vert \mathcal{Y} \Vert := \left|\int_{\mathcal{Y}}d
   \mathbf{x}\right|$,
 the \emph{regularized union} ``$\cup^{\perp\perp}$''
 as ${\cal P}\cup^{\perp\perp} {\cal Q}:=
 \left(({\cal P}\cup{\cal Q})^{\perp}\right)^{\perp}$, 
 and the \emph{regularized symmetric difference} ``$\oplus$'' as
 \begin{equation}
   \label{eq:symmetricDiff}
   \mathcal{P} \oplus \mathcal{Q} := \left( \mathcal{P}\,\backslash\,
     \mathcal{Q} \right) \,
   \mathop{\cup}\nolimits^{\perp\perp} \,
   \left( \mathcal{Q} \,\backslash \,\mathcal{P} \right).
 \end{equation}
Note that $d$ is indeed a metric on $\mathbb{Y}$
because, for any $\mathcal{P}, \mathcal{Q},{\cal R} \in \mathbb{Y}$, 
${\cal P}\ne{\cal Q}$ implies $d({\cal P},{\cal Q})>0$,
$d({\cal P}, {\cal P})=0$, 
$d({\cal P}, {\cal Q}) = d({\cal Q}, {\cal P})$,
and
$d({\cal P}, {\cal Q}) \le d({\cal P}, {\cal R})
+ d({\cal R}, {\cal Q})$.

Then it is natural to define
 the total error of an IT method as 
 \begin{equation}
   \label{equ:totalerror}
   E_{\mathrm{IT}}(t_n) := d\left( \mathcal{M}(t_n), \mathcal{M}^n\right)
   = \Vert \mathcal{M}(t_n) \oplus \mathcal{M}^n \Vert
\end{equation}
where $\mathcal{M}^n$ and $\mathcal{M}(t_n)$
 are the computational and exact results of a moving Yin set
 at time $t_n$, respectively. 
A generic method for the IT problem in \cref{def:it}
is 

\begin{definition}
  \label{def:MARS} 
  (\cite{ZhFo16})
  A \emph{MARS} method is a generic 
  $\mathrm{IT}$ method $\mathfrak{L}_{\mathrm{MARS}} \,:\,
  \mathbb{Y}\rightarrow \mathbb{Y}$ that consists of three
  unitary operations at each time step
  \begin{equation}
    \label{equ:MARS}
    \mathcal{M}^{n+1}=\mathfrak{L}_{\mathrm{MARS}}^n \mathcal{M}^n\,:=\,
    \left(\chi_{n+1} \circ \varphi_{t_n}^k \circ \psi_n\right) \mathcal{M}^n,
  \end{equation}
  where $\mathcal{M}^n\in \mathbb{Y}$ is the computational result
  that approximates the exact solution
  $\mathcal{M}(t_n)\in \mathbb{Y}$
  of the $\mathrm{IT}$ problem in \cref{def:it},
  $\varphi_{t_n}^k : \mathbb{Y} \rightarrow \mathbb{Y}$
  a fully discrete flow map 
  that approximates the exact flow map $\phi_{t_n}^k$ 
  both in time and in space,
  $\psi_n : \mathbb{Y}\rightarrow \mathbb{Y}$
  an augmentation operation at $t_n$
  to prepare $\mathcal{M}^n$ for $\varphi_{t_n}^k$, and
  $\chi_{n+1} : \mathbb{Y} \rightarrow \mathbb{Y}$
  an adjustment operation after $\varphi_{t_n}^k$.
\end{definition}

It is shown in \cite[Sec. 3.2]{HuLi25} that
 the IT error of a MARS method is bounded
 by the sum of four individual errors
 that correspond to the approximation
 of the initial Yin set $\mathcal{M}(t_0)$
 and the operations in \cref{equ:MARS}.

Although all operations in \cref{equ:MARS} have the same signature 
 $\mathbb{Y} \rightarrow \mathbb{Y}$, 
 \cref{thm:bdryrep} suggests that
 each operation be considered as acting
 upon the glued surfaces of $\partial {\cal M}$.
For a MARS method to be practical,
 each glued surface is approximated
 with a homeomorphic triangular mesh.
We use $\psi_n$ and $\chi_{n+1}$ 
 to add and remove markers on the interface,
 respectively. 
Besides increasing and decreasing the number of markers,
 $\psi_n$ and $\chi_{n+1}$
 may also change the structure of the triangular mesh.
In contrast,
 $\varphi_{t_n}^k$ always preserves the number of markers
 and the structure of the triangular mesh. 

A 2D Yin set is represented 
 by sequences of markers on its boundary Jordan curves
 and a curve fitting scheme that maps a marker sequence
 to a continuous curve. 
To achieve fourth- or higher-order accuracy, 
 numerical stability 
 of solving the linear system for coefficients of piecewise polynomials
 is ensured by 
 the \emph{$(r,h)$-regularity} \mbox{\cite[Def. 4.12]{TaQi25}}
 of each marker sequence, 
 i.e., the distance between each pair of adjacent markers
 being within the range $[rh,h]$ where $h>0$ and $r\in(0,1]$. 
Note that
 the meaning of $(r,h)$-regularity differs 
 for periodic and not-a-knot splines; 
 see \cite[Def. 4.14 and Def. 4.17]{TaQi25}.
In both cases,
 the $(r,h)$-regularity 
 is guaranteed by
 the multiphase cubic MARS method proposed in \cite{TaQi25}
 and yields a proof of the fourth-order convergence 
 of the 2D MARS method \cite[Sec. 5]{TaQi25}. 
A straightforward generalization of the $(r,h)$-regularity from 2D to 3D
 gives 

\begin{definition}[$(r, h, \theta)$-regularity]
  \label{def:repinv}
  For $r\in(0,1]$, $h>0$, and $\theta>0$, 
  a triangular mesh is said to be \emph{$(r,h,\theta)$-regular}
  if each interior angle of every triangle in the mesh is no less than $\theta$
  and the length of any edge is
  within the range $\left[rh, h\right]$.
\end{definition} 

When we approximate a 3D Yin set with a set of triangular meshes, 
 there is no need to worry about
 ill conditioning of this boundary representation
 because no linear systems are solved.
Consequently, 
 for the sake of IT,
 only the $(r, h)$-regularity is needed
 to guarantee the second-order convergence of a MARS method.
 
However,
 IT is seldom a stand-alone problem
 and the $(r, h, \theta)$-regularity might be desirable
 in complex scenarios such as 
 fourth- and higher-order boundary representations of 3D Yin sets
 and the coupling of IT with a main flow solver
 to solve moving boundary problems. 
Indeed, the main focus of this work
 is the maintenance of the $(r, h, \theta)$-regularity
 for triangular meshes
 that represent the evolving Yin set.

\section{Algorithms}
\label{sec:algorithm}
A 3D linear MARS method is proposed
 in \Cref{subsec:3DlinearMARS},
 where the $(r_{\mathrm{tiny}}, h_{L}, \theta)$-regularity
 of the evolving triangular mesh
 is maintained at each time step 
 by a cascade of local mesh adjustments.  
The first, and also the simplest,  
 consists of three classical techniques
 reviewed in \Cref{subsec:elementary}. 
When these classical techniques fail,
 we improve the mesh regularity
 by moving vertices
 to minimize the potential energy of a spring system, 
 as detailed in \Cref{subsec:relocate}.
The last, and also the most general, 
 is to regenerate a local mesh
 from randomly scattered vertices
 in \Cref{subsec:regenerate}. 
The three families of mesh adjustments
 complement each other 
 in terms of efficiency, complexity, and generality.

\subsection{The 3D linear MARS method}
\label{subsec:3DlinearMARS}

By \Cref{thm:bdryrep}, 
 any 3D Yin set can be represented by 
 a poset of pairwise almost disjoint glued surfaces, 
 each of which
 can be approximated arbitrarily well by a triangular mesh, 
 cf. \Cref{def:triangularMesh}.
The data structure for this boundary representation
 is designed as a poset of triangular meshes,
 each of which is stored
 as a 3-tuple $\mathcal{T}=(V,E,T)$ 
 where $V=\{p_i\}$ is the vertex set  
 containing position vectors
 of interface markers in $\mathbb{R}^3$,
 $E=\{\overline{p_ip_j}\}$ the edge set
 consisting of unordered pairs of vertex indices in $V$,
 and $T=\{\triangle_{p_ip_jp_k}\}$ the triangle set 
 consisting of ordered triples of vertex indices in $V$.



\begin{definition}
  \label{def:3DlinearMARS}
  Let $\varphi$ be the discrete flow map that approximates the exact flow
  map $\phi$ of an IT problem.
  Let $h_L$, $r_{\mathrm{tiny}}$ and $\theta$ be
  the parameters in \cref{def:repinv}.
  In each time step $\left[ t_n, t_n+k \right]$,
  the \emph{3D linear MARS method}
  takes as its input the discrete representation
  of $\partial\mathcal{M}^n=\{{\cal T}_j^n\}$, and
  advances it to that of $\partial\mathcal{M}^{n+1}=\{{\cal T}_j^{n+1}\}$:
  \begin{enumerate}[({M3D}-1)]
  \item Initialize $\partial\mathcal{M}^{n+1}$
    by tracing forward in time vertices of $\partial\mathcal{M}^{n}$,
    i.e.,
    \begin{equation}
      \label{eq:flowMapOnMn}
      \forall {\cal T}_j^n\in\partial\mathcal{M}^{n},\quad 
      {\cal T}_j^{n+1} = \left(\varphi_{t_n}^k(V({\cal T}_j^n)),
      E({\cal T}_j^n), T({\cal T}_j^n)\right).
    \end{equation}
  \item If $\partial \mathcal{M}^{n+1}$
    has any edge 
    whose length is greater than $h_L$,
    \begin{enumerate}
    \item for each edge $\overline{p_ip_j}$ with $|p_i-p_j|>h_L$, 
      locate $\overleftarrow{p_i},\overleftarrow{p_j}$
      in $\partial \mathcal{M}^{n}$ as preimages of $p_i,p_j$
      and divide $\overline{\overleftarrow{p_i}\overleftarrow{p_j}}$
      into equidistant subedges by inserting new markers
      as new vertices of $\partial \mathcal{M}^{n}$,
    \item add new edges between vertex preimages
      and the newly added markers 
      so that $\partial \mathcal{M}^{n}$ remains a simplicial 2-complex,
    \item update $\partial \mathcal{M}^{n+1}$ by
      (\ref{eq:flowMapOnMn}).
    \end{enumerate}
    Repeat the above steps until no edge length
    in $\partial \mathcal{M}^{n+1}$ is greater than $h_L$.      
  \item For each edge in $\partial \mathcal{M}^{n+1}$ that is shorter than
    $r_{\mathrm{tiny}}h_L$, collapse the edge into one of its endpoints.
    Repeat this step until no edge is shorter than
    $r_{\mathrm{tiny}}h_{L}$.
  \item For each triangle in $\partial \mathcal{M}^{n+1}$
    that violates the $\theta$-regularity,
    \begin{enumerate}
    \item find a neighboring triangle
      where the common edge is adjacent to the smallest interior angle,
      combine the two triangles to form a quadrilateral,
      and check whether flipping its diagonal increases
      the smallest interior angle; 
      if so, flip it, 
    \item if (a) fails to achieve the $\theta$-regularity,
      apply \cref{alg:relocating}, 
    \item if (b) fails to achieve the $\theta$-regularity,
      apply  \cref{alg:regenerating}.
    \end{enumerate}
  \end{enumerate} 
\end{definition}

The above 3D linear MARS method is indeed a MARS method
 in the sense of \cref{def:MARS}: 
 substeps (M3D-2a,b) constitute
 the augmentation operation $\psi_n$
 that adds new markers to $\partial \mathcal{M}^n$
 to maintain the upper bound $h_L$ of edge lengths,
 substeps (M3D-1, 2c) constitute the discrete flow map
 that advects the interface markers,
 and substeps (M3D-3, 4) constitute the adjustment operation $\chi_{n+1}$
 that 
 enforces the lower bound $\theta$ of interior angles
 as well as the lower bound $r_{\mathrm{tiny}}h_L$
 of edge lengths.

Although it suffices to only employ the Lagrangian grid
 of a moving triangular mesh to evolve the interface, 
 an Eulerian grid is often needed
 when coupling an IT method with a main flow solver.
One distinguishing feature of the MARS method
 is the user-specified relation $h_L = O\left(h^{\alpha}\right)$
 that connects the length scale $h_L$ of the interface
 to the length scale $h$ of the bulk flow
 such as the size of the Eulerian grid.
Since the interface is a set of codimension one
 and the CFL condition requires $O\left(\frac{1}{h}\right)$ time steps
 for a march in time, 
 the complexity of a 3D MARS method 
 is $O\left(\frac{1}{h^{1+2\alpha}}\right)$
 for the choice of $h_L = O\left(h^{\alpha}\right)$.
In contrast,
 the optimal complexity of a 3D main flow solver 
 is $O\left(\frac{1}{h^4}\right)$.
Therefore, 
 a 3D MARS method with $\alpha\le \frac{3}{2}$
 does not increase the complexity of the entire solver;
 see \cite[Sec. 5.2.4]{Zh18} for more discussions.
On the other hand,
 the proposed linear 3D MARS method
 is third-order accurate for 
 the choices of a third-order time integrator
 and the relation $h_L = O\left(h^{\frac{3}{2}}\right)$.

\subsection{Elementary mesh adjustments (EMA)}
\label{subsec:elementary}

If the $(r_{\mathrm{tiny}}, h_L, \theta)$-regularity is violated, 
 we first try some simple and efficient mesh adjustments,
 which are 
\begin{itemize}
\item edge splitting:
  any edge longer than $h_L$ is split at its midpoint; 
\item edge collapse:
  any edge shorter than $r_{\mathrm{tiny}}h_{L}$
  is collapsed into an endpoint; 
\item edge flipping:
  see (M3D-4a) in \Cref{def:3DlinearMARS}. 
\end{itemize}

\begin{figure}
\centering  
\subfigure[an edge splitting]{ 
\label{fig:basicops:sub1}
\includegraphics[width=0.53\textwidth]{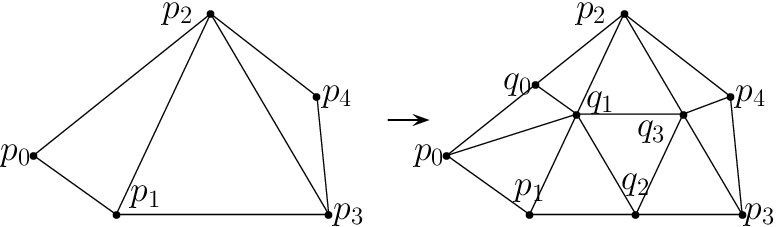}}
\hfill
\subfigure[an edge collapse]{
\label{fig:basicops:sub2}
\includegraphics[width=0.35\textwidth]{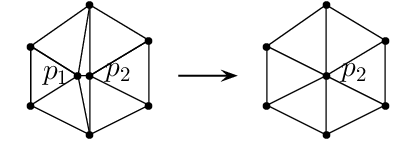}}

\subfigure[a successful edge flipping]{
\label{fig:basicops:sub3}
\includegraphics[width=0.46\textwidth]{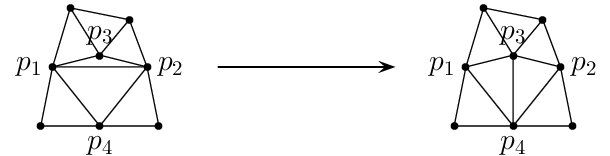}}
\hfill
\subfigure[an unsuccessful edge flipping]{
  \label{fig:swapfail}
\includegraphics[width=0.46\textwidth]{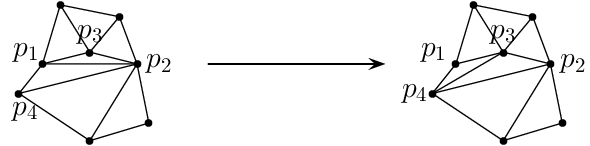}}
\caption{Three elementary mesh adjustments.
  In subplot (a),
  edges $p_{0}p_{2}$, $p_1p_2$, $p_2p_3$ and $p_1p_3$
  are longer than $h_L$ and are split at their midpoints; 
  new edges are then added locally to obtain a new triangular mesh.
  In subplot (b), the edge $p_1p_{2}$ shorter than $r_{\mathrm{tiny}}h_{L}$
  is collapsed into its endpoint $p_2$.
  In subplot (c),
  $\angle p_{3}p_1p_{2}$ and $\angle p_3p_2p_{1}$
  are smaller than $\theta$
  and the diagonal $p_1p_2$ is flipped to $p_3p_{4}$
  to remove the two small angles.
  In subplot (d), the edge flipping
  fails to fulfill the $\theta$-regularity.
 }
\label{fig:basicops}
\end{figure}  

The above three adjustments are schematically
 depicted in \cref{fig:basicops} (a,b,c).
Edges that are too long or too short
 are effectively handled by
 edge splitting or edge collapse, respectively. 
Edge flipping, however, may fail in certain cases;
 see \cref{fig:basicops} (d) for an example.
In Subsections \ref{subsec:elementary} and \ref{subsec:relocate},
 we deal with triangles that violate the $\theta$-regularity
 and to which the above edge flipping does not apply.
Neither increasing the maximum edge length
 nor decreasing the minimum edge length,
 the algorithms
 in Subsections \ref{subsec:elementary} and \ref{subsec:relocate}
 preserve the $(r_{\mathrm{tiny}},h_{L})$-regularity
 achieved by (M3D-2,3). 

\subsection{Vertex relocation
  via energy minimization (VREM)}
\label{subsec:relocate}

In a triangular mesh ${\cal T}=(V,E,T)$,
 very small angles tend to generate largely varying edge lengths.
This motivates us to move the vertices 
 so that edge lengths in ${\cal T}$ are as uniform as possible.
In \Cref{sec:minim-potent-energy},
 we associate a spring system with ${\cal T}$ 
 so that its total potential energy 
 measures the variance of the edge lengths in ${\cal T}$.
In \Cref{sec:projection-operator}, we define a projection operator
 to facilitate the movement of vertices of ${\cal T}$
 on $|{\cal T}|$,
 the underlying space of ${\cal T}$
 as defined in \Cref{sec:triangular-mesh-as}.
In \Cref{sec:vrem-algorithm},
 we propose a VREM algorithm 
 to minimize the potential energy 
 and prove its convergence. 

\subsubsection{Minimizing the total potential energy
  of a spring system}
\label{sec:minim-potent-energy}
 
An edge in $E$ is
 a \emph{boundary edge} if it is adjacent to
 (i.e., a face of) only one triangle in $T$;
 otherwise it is an \emph{interior edge}.
A vertex is a \emph{boundary vertex}
 if it is incident to any boundary edge;
 otherwise it is an \emph{interior vertex}.
The sets of boundary edges and interior edges
 are denoted by $E_B$ and $E_I$,
 respectively;
 similarly,
 those of boundary vertices and interior vertices
 are denoted by $V_B$ and $V_I$, 
 respectively.

Consider a triangular mesh $\mathcal{T}$
 whose boundary vertices are \emph{fixed} and
 whose interior vertices are \emph{free to move}. 
By definition a boundary edge
 only consists of boundary vertices
 and can thus be thought of as a metal bar.
In contrast, each interior vertex is viewed as a particle
 and each interior edge $\overline{p_ip_j}$ a spring.
Indeed, for $|p_i-p_j|$ to vary, 
 $p_i$ or $p_j$ must be an interior vertex.
 
By Hooke's law,
 the \emph{force} exerted by the edge $\overline{p_ip_j}$
 on the $i$th vertex is 
\begin{equation}
  \label{equ:force-i}
  \begin{array}{c}
    \forall \overline{p_ip_j}\in E_{I}({\cal T}),\quad 
    \mathbf{f}_{j}(p_i) =
    \frac{|p_{j} - {p}_{i}|-R_L}{|{p}_{j}-{p}_{i}|}
    \ ({p}_{j}-{p}_{i}), 
  \end{array}
\end{equation}
where $p_i=(x_i,y_i,z_i)^{\top}$ is the position vector
 of the $i$th vertex,
 $R_L$ the resting length of the spring $\overline{p_ip_j}$, 
 and $|p_j - p_i|=\sqrt{(x_i-x_j)^2+(y_i-y_j)^2+(z_i-z_j)^2}$
 the actual length of $\overline{p_i p_j}$. 
Although $R_L$ can be made a function of the interface curvature at $p_i$,
 in this work we simply set $R_L$
 to be the average length of all interior edges in ${\cal T}$.
According to (\ref{equ:force-i}),  
 the force between $p_i$ and $p_j$ is attractive if $|p_j-p_i|>R_L$
 and is repulsive if $|p_j-p_i|<R_L$.
As shown in \cref{fig:spring}, 
 the net force on $p_i$ tends to move it 
 so as to reduce the length variance of all edges at $p_i$.

\begin{figure}
\centering
\label{fig:spring}
\includegraphics[width=0.60\textwidth]{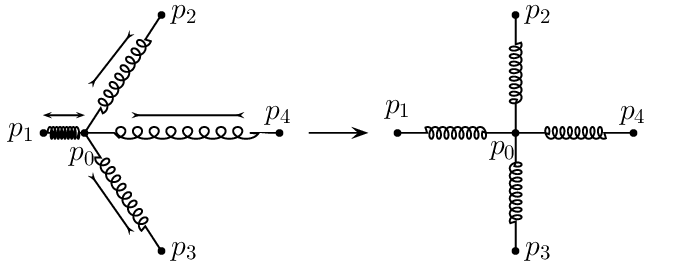}
\caption{The net force of a vertex $p_0$
  moves $p_0$ so that
  the lengths of edges at $p_0$ become closer to the resting length. 
  On the left, only $p_0p_1$ exerts a repulsive force on $p_0$
  while all other edges exert attractive forces on $p_0$. 
}
\end{figure}

By the work-energy theorem \cite[Sec. 6.2]{YoFr15}
 and the force $\mathbf{f}_j$ in (\ref{equ:force-i}) being conservative, 
 the \emph{potential energy of an interior edge} $\overline{p_ip_j}$
 can be defined as 
 \begin{equation}
   \label{equ:Eij}
   \begin{array}{c}
     U_{ij} = \int_{ {p}_{i}}^{{q}_{i}}
     \mathbf{f}_{j}(q) \cdot\dif q
     =-\int_{|{p}_{j} - {p}_{i}|}^{R_L}
     (r - R_L) \mathrm{d}r 
     =\frac{1}{2} (|{p}_{j} - {p}_{i}| - R_L)^{2}, 
   \end{array}
 \end{equation}
 where the point $q_i$ satisfies $|q_i-p_j|=R_L$, 
 the three points $q_i,p_i,p_j$ are collinear,
 and $r$ is the length of $\overline{q p_j}$
 as $q$ moves from $p_i$ to $q_i$.
The second equality of (\ref{equ:Eij})
 follows from the variable substitution
 $\dif r = \frac{q-p_j}{|q-p_j|}\dif q$.
Clearly, the potential energy $U_{ij}$
 is always non-negative
 and is zero only when the edge length equals the resting length.
It is then natural to define
 the \emph{total potential energy of a triangular mesh} ${\cal T}$ as 
\begin{equation}
  \label{eq:potentialEnergyTri}
  U({\cal T}) := 
  \sum\nolimits_{\overline{p_i p_j}\in E_I} U_{ij}.
\end{equation}

By (\ref{equ:Eij}),
 $U({\cal T})$ is non-negative and is zero only if
 all interior edges in ${\cal T}$ have the same length $R_L$. %
In terms of its interior vertices,
$U({\cal T})$ can also be expressed as
\begin{equation}
  \label{eq:potentialEnergyVert}
  U({\cal T}) = U(\mathbf{p})= \frac{1}{2}  
  \sum\nolimits_{\overline{p_i p_j}\in E_I}
  (|p_j - p_i| - R_L)^{2},
\end{equation}
where $\mathbf{p} = (p_1^{\top}, p_2^{\top}, \ldots, p_N^{\top})^{\top}$ and
$N$ is the number of interior vertices. 
By (\ref{equ:force-i}), (\ref{equ:Eij}),
(\ref{eq:potentialEnergyTri}), and (\ref{eq:potentialEnergyVert}), 
we have 
 \begin{equation}
   \label{eq:energyGradient}
   \frac{\partial U}{\partial p_i} :=
   \left(\frac{\partial U}{\partial x_i},
     \frac{\partial U}{\partial y_i},
     \frac{\partial U}{\partial z_i}\right)^{\top} =
   -\sum\nolimits_{\overline{p_i p_j}\in E_I}
   \mathbf{f}_j(p_i) =: -F_i,
 \end{equation}
 where the last equality in \cref{eq:energyGradient}
 defines $F_i$ as \emph{the net force acting on the $i$th vertex}. 
 Then the energy gradient $\nabla U$ is defined by
 \begin{equation}
  \label{eq:nablaU}
  \nabla U^{\top} := \left(\frac{\partial U}{\partial p_1}^{\top},
    \frac{\partial U}{\partial p_2}^{\top}, \cdots,
  \frac{\partial U}{\partial p_N}^{\top}\right). 
\end{equation}

To sum up,
 the variance of edge lengths in a triangular mesh ${\cal T}$
 is decreased by reducing $U({\cal T})$. 
Therefore, we aim to solve the minimization problem
\begin{equation}
  \label{eq:minimizationPE}
  \min_{V_I} U({\cal T})
  \quad \text{subject to }
  \left\{
  \begin{array}{l}
    \text{the topology of } {\cal T} \text{ remaining unchanged};
    \\
    \text{positions of the vertices staying on } |{\cal T}|, 
  \end{array}
  \right.
\end{equation}
where the first condition precludes
mesh entanglements such as coinciding vertices
and crossing edges
while the second condition helps to accurately approximate
 $|{\cal T}|$ 
 with the underlying space of the adjusted triangular mesh.

We would like to convert (\ref{eq:minimizationPE}) 
 to an unconstrained optimization problem. 
This is done by first ignoring the second condition
 in (\ref{eq:minimizationPE})
 to allow free 
 movements of interior vertices
 and then projecting their new positions
 back to $|{\cal T}|$.
 
\subsubsection{A local projection 
  to $|{\cal T}|$ at $p$}
\label{sec:projection-operator}

The free movement of an interior vertex $p$
 is made sufficiently small
 so that the image of its new position
 under the local projection at $p$ 
 is within $\overline{\Star}(p)$,
 the star of $p$ in $|{\cal T}|$; see \Cref{sec:triangular-mesh-as}. 

\begin{definition}
  \label{def:localProj}
  The \emph{local projection} $\mathfrak{P}^{\mathcal{T}}_{p}$ 
  of a triangular mesh ${\cal T}$ at an interior vertex $p$ 
  maps points in a neighborhood $\mathcal{N}(p)\subset\mathbb{R}^3$
  to $\overline{\Star}(p)\subset |{\cal T}|$, 
  by steps as follows. 
  \begin{enumerate}[({LPT}-1)]
  \item Denote by $I_{\text{adj}}(p):=
    \{i: p_i=(x_i,y_i,z_i)^{\top} \text{ is adjacent to } p \}$
    the index set of vertices adjacent to $p$ 
    and suppose 
    $\max_{i,j\in I_{\text adj}(p)}|z_i - z_j|$, 
    the coordinate span in $z$,
    is no greater than those in $x$ and in $y$.
    Fit a plane $\Gamma$ in the form of $z=Ax+By+C$ to the vertices
    $(p_j)_{j\in I_{\text adj}(p)}$ 
    by solving 
    \begin{equation}
      \label{eq:leastsquare}
      \begin{array}{l}
        \min\limits_{A, B, C}\,\sum\limits_{j\in I_{\text adj}(p)}
        (Ax_j + By_j+C-z_j)^{2}.
      \end{array}
    \end{equation}
    In similar cases of the coordinate span in $x$ or in $y$
    being the smallest, the form of $\Gamma$ 
    is $x=Ay+Bz+C$ or $y=Ax+Bz+C$, respectively.
  \item Define a projector ${\cal P}_{\Gamma}:
    {\cal N}(p)\rightarrow \Gamma$
    by setting ${\cal P}_{\Gamma}(q)$
    to be the point that is closest to $q$ in $\Gamma$.
    Then apply ${\cal P}_{\Gamma}$
    to all triangles in $\overline{\Star}(p)$
    to obtain a planar triangular mesh.
  \item For a point $q\in\mathcal{N}(p)$,
    determine which planar triangle contains ${\cal P}_{\Gamma}(q)$,
    and compute its barycentric coordinates with respect to that triangle.
  \item Use the barycentric coordinates to compute the corresponding
    point on the original triangle, which is returned
    as $\mathfrak{P}^{\mathcal{T}}_{p}(q)$. 
  \end{enumerate}
  When there is no danger of ambiguity,
  we simply write $\mathfrak{P}_{p}$ instead of $\mathfrak{P}^{\mathcal{T}}_{p}$. 
\end{definition}

It is well known that the least-squares problem (\ref{eq:leastsquare})
 is solved by applying the QR factorization
 to the linear system $M\mathbf{c}=\mathbf{z}$
 where the $i$th row of $M$ is $(x_i,y_i,1)$,
 $\mathbf{c}=(A,B,C)^{\top}$,
 and the $i$th component of $\mathbf{z}$ is $z_i$. 
As $h_L\rightarrow 0$,
 the underlying space $|{\cal T}|$,
 the fitted plane $\Gamma$, 
 and the local projection $\mathfrak{P}_{p}$ converge to
 the tracked 2-manifold, 
 the tangent plane of the 2-manifold at $p$, 
 and the identity operator, respectively.

The neighborhood ${\cal N}(p)$ 
 is implicitly defined by (LPT-2,3):
 ${\cal N}(p)$ consists of each point $q$
 whose image under ${\cal P}_{\Gamma}$
 falls within some planar triangle in \mbox{(LPT-2)}.
%
As an affine transformation,
 the orthogonal projector ${\cal P}_{\Gamma}$
 preserves barycentric coordinates.
Consequently, 
 the local projection
 $\mathfrak{P}_p: {\cal N}(p) \to \overline{\Star}(p)$ satisfies
 $\mathfrak{P}_p(q)=q$ for any $q\in \overline{\Star}(p)$; 
 then $\overline{\Star}(p)\subset {\cal N}(p)$ implies
 that $\mathfrak{P}_p$ is indeed a projection,
 i.e., $\mathfrak{P}_p^2=\mathfrak{P}_p$. 

\subsubsection{The VREM algorithm and its convergence}
\label{sec:vrem-algorithm}

\begin{figure}
\centering
\label{fig:projection}
\subfigure[the net force of $p$ moves $p$ to $p'$]
{\includegraphics[width=0.40\textwidth]{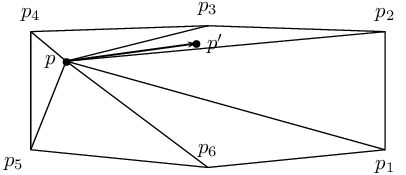}}
\hfill
\subfigure[project $p'$ onto $\overline{\Star}(p)$ to obtain $p''$]
{\includegraphics[width=0.40\textwidth]{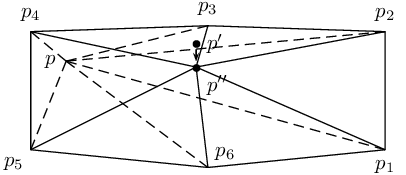}}
\caption{The VREM algorithm updates
  positions of the interior vertices of a triangular mesh
  by first moving each $p_i\in V_I$
  along the direction of its net force to $p'$
  and then projecting $p'$ onto the star of $p$
  to obtain $p''$ as the updated position of $p$.
  As defined in \Cref{sec:triangular-mesh-as}
  and shown in subplot (a), 
  the star of $p$ is the union of the six triangles
  with $p$ as their common vertex 
  while the link of $p$ is 
  the simple closed curve formed by the line segments
  `` $p_{1}\rightarrow p_{2} \rightarrow p_{3}
  \rightarrow p_{4} \rightarrow p_{5}
  \rightarrow p_{6} \rightarrow p_{1}$.''
}
\end{figure} 

As shown in \Cref{fig:projection},
the core of our solution to (\ref{eq:minimizationPE}) is

\begin{definition}
  \label{def:VREM}
  Given a triangular mesh ${\cal T}$
  with its interior vertex set $V^{(s)}_I$ at the $s$th iteration, 
  the \emph{VREM algorithm} 
  computes an updated vertex set $V^{(s+1)}_I$ by 
  \begin{equation}
    \label{eq:VREM}
    \forall p_i^{(s)}\in V_I^{(s)},\quad
    {p}^{(s+1)}_i= \mathfrak{P}_{p_i^{(0)}}  \left(p^{(s)}_i
      + \alpha^{(s)} F_i^{(s)}\right), 
  \end{equation}
  where $F_i^{(s)}$ is given
  by (\ref{eq:energyGradient}) and (\ref{equ:force-i}),
  the local projection $\mathfrak{P}_{p_i^{(0)}}$
  is described in Definition \ref{def:localProj}, 
  and the step length $\alpha^{(s)}$ is determined by
  \begin{enumerate}[(SPL-1)]
  \item initializing $\alpha^{(s)}$
    with $\alpha^{(s)}_0 := \min\limits_{i}
    \frac{2\ell_{i}^{(s)}}{5\|F_i^{(s)}\|_2}$
    where $\ell_{i}^{(s)}$ is the minimum distance
    from $p_i^{(s)}$ to any point in its link, 
    cf. the last paragraph in \Cref{sec:triangular-mesh-as}.
  \item iteratively reducing $\alpha^{(s)}$ by
    $\alpha^{(s)}_{k+1} = \rho \alpha^{(s)}_k$ until
    \begin{equation}
      \label{eq:backtrackingCond}
      \begin{array}{l}
        U(\mathbf{p}^{(s+1)})\le U(\mathbf{p}^{(s)})
        + c \alpha^{(s)} \nabla U(\mathbf{p}^{(s)})^{\top} \mathbf{d}^{(s)},
      \end{array}
    \end{equation}
    where $\rho, c\in (0,1)$
    are user-defined parameters
    and the vector $\mathbf{d}^{(s)}$ is 
    \begin{equation}
      \label{eq:ds}
      \begin{array}{rl}
        (\mathbf{d}^{(s)})^{\top} &:=
        \left( d_{1}^{(s)\top},  d_{2}^{(s)\top},\cdots ,
        d_{N}^{(s)\top}\right),
        \\
        d_{i}^{(s)} &:= \frac{1}{\alpha_{0}^{(s)}}
                      \left(\mathfrak{P}_{p_i^{(0)}}(p_i^{(s)}+\alpha_{0}^{(s)} F_i^{(s)})
                      - p_i^{(s)}\right). 
      \end{array}
    \end{equation}
  \end{enumerate}
\end{definition}

The line search (\ref{eq:VREM}) reduces to the steepest descent
 in the asymptotic range of $h_L\rightarrow 0$, 
 because $\mathfrak{P}_{p_i^{(0)}}$ converges to the identity
 and the descent direction $F_i=-\frac{\partial U}{\partial p_i}$
 in (\ref{eq:energyGradient}) implies that
 $\mathbf{d}^{(s)}$ converges to $-\nabla U(\mathbf{p}^{(s)})$
 in (\ref{eq:backtrackingCond}). 
Then we have
\begin{displaymath}
  U\left(\mathbf{p}^{(s)} - \alpha^{(s)} \nabla U^{(s)}\right) =
  U(\mathbf{p}^{(s)}) - \alpha^{(s)} \|\nabla U^{(s)}\|^2
  + O((\alpha^{(s)})^2).
\end{displaymath}

On the one hand,
 $\alpha^{(s)}$ should be sufficiently small so that
 the total potential energy $U$ is indeed reduced, i.e., 
 $U(\mathbf{p}^{(s)} - \alpha^{(s)} \nabla U^{(s)})< U(\mathbf{p}^{(s)})$.
On the other hand,
 $\alpha^{(s)}$ should be large enough so that
 the reduction of $U$ is substantial in each VREM iteration.
In (SPL-1), 
 the maximum offset of any $p^{(s)}_i$
 is limited to $\frac{2}{5}\ell^{(s)}_i$
 to prevent potential entanglement of the triangular mesh,
 fulfilling the first condition in (\ref{eq:minimizationPE}). 
Then the initial value of $\alpha^{(s)}$ is repeatedly reduced
 until (\ref{eq:backtrackingCond}) holds.
This simple approach yields
 a suitable value of $\alpha^{(s)}$ that substantially reduces $U$.
In this work, we use $(c,\rho)=(10^{-4},0.8)$.

Given a triangle $\triangle_{p_i p_j p_k}$ that violates
 the $(r_{\mathrm{tiny}}, h_{L}, \theta)$-regularity,
 we employ a breadth-first search (BFS)
 to identify a triangular mesh $\mathcal{T}$
 that covers $\triangle_{p_i p_j p_k}$, 
 then we apply to ${\cal T}$ the VREM algorithm in \Cref{def:VREM}.
We formalize the algorithmic steps of VREM
 in \Cref{alg:relocating}
 and prove its convergence in \Cref{lem:VREM}. 

\begin{algorithm}
\caption{\texttt{VREM}}
\label{alg:relocating} 
\begin{algorithmic}[1]
  \Input{${\cal T}_{\partial {\cal M}}$: 
    a triangular mesh that represents the boundary $\partial \mathcal{M}$
    of a Yin set ${\cal M}$;
    \\ \hspace{5mm}
    $(r_{\mathrm{tiny}}, h_{L}, \theta)$:
    regularity parameters;
    \\ \hspace{5mm}
    $\triangle_{p_i p_j p_k}$: a triangle  
    in ${\cal T}_{\partial \mathcal{M}}$ that
    violates the $(r_{\mathrm{tiny}}, h_{L}, \theta)$-regularity;
    \\ \hspace{5mm}
    $\mu$: the maximum number of BFSs;
    \\ \hspace{5mm}
    $\nu$: the maximum number of iterations of VREM.
  }
  \PreConditions $h_L$ is sufficiently small.
  \PostConditions {${\cal T}_{\partial {\cal M}}$
    is updated with a more uniform distribution of edge lengths.}
  \STATE{$\mathcal{T} \leftarrow$ an empty triangular mesh;}
  \STATE{$V \leftarrow$ a vertex set $\{p_{i}\}$,
    where the interior angle at $p_i$
    is the largest in $\triangle_{p_i p_j p_k}$;}
  \STATE{isRegular $\leftarrow$ False, iter $\leftarrow$ 1;} 
  \WHILE{isRegular == False
    \text{and} iter $<$= $\mu$}
  \STATE{Add to $\mathcal{T}$
    all triangles in ${\cal T}_{\partial {\cal M}}$
    that are adjacent to at least one vertex in $V$;}
  \STATE{$V \leftarrow$ the vertex set of $\mathcal{T}$;}
  \STATE{Set the resting length as the average of all edges in ${\cal T}$;}
  \FOR{$\ell=1$ to $\nu$}   
  \STATE{Update the interior vertices of ${\cal T}$ by (\ref{eq:VREM});}
\IF{$\mathcal{T}$ satisfies
  the $(r_{\mathrm{tiny}},h_{L},\theta)$-regularity}
\STATE{Update the corresponding subcomplex
  in ${\cal T}_{\partial \mathcal{M}}$
  with the new $\mathcal{T}$;}
  \STATE{isRegular $\leftarrow$ True;}
  \STATE{Exit the loop;}
  \ENDIF
  \ENDFOR
  \STATE{iter $\leftarrow$ iter+1;}
  \ENDWHILE
\end{algorithmic}
\end{algorithm} 

\begin{lemma}
  \label{lem:VREM}
  The VREM algorithm converges on
  any triangular mesh ${\cal T}$ of which
  the maximum edge length is sufficiently small,
  \begin{equation}
    \label{eq:VREMconverges}
    \forall p_i\in V_I({\cal T}),\quad
    \lim_{s\rightarrow +\infty}\left(p_i^{(s+1)}-p_i^{(s)}\right)
    =\mathbf{0}.
  \end{equation}
\end{lemma}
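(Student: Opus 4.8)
The plan is to recognize the VREM update \cref{eq:VREM} as a projected steepest-descent iteration with Armijo backtracking applied to the total potential energy $U$, and to prove \cref{eq:VREMconverges} by a Zoutendijk-type summability argument. The two structural facts I would exploit are that $U$ is bounded below (indeed $U\ge 0$, as \cref{eq:potentialEnergyVert} is a sum of squares) and that the line search in (SPL-2) enforces sufficient decrease at every iteration. First I would establish monotone convergence of the energy: since the accepted step length $\alpha^{(s)}$ satisfies the Armijo condition \cref{eq:backtrackingCond} and $\mathbf{d}^{(s)}$ in \cref{eq:ds} is a descent direction (justified below), each iteration strictly reduces $U$ unless $\mathbf{p}^{(s)}$ is already stationary. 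As $U(\mathbf{p}^{(s)})$ is nonincreasing and bounded below, it converges and the telescoping sum $\sum_s\bigl(U(\mathbf{p}^{(s)})-U(\mathbf{p}^{(s+1)})\bigr)$ is finite. Summing \cref{eq:backtrackingCond} then yields $\sum_s \alpha^{(s)}\bigl(-\nabla U(\mathbf{p}^{(s)})^{\top}\mathbf{d}^{(s)}\bigr)<\infty$, so in particular $\alpha^{(s)}\bigl(-\nabla U(\mathbf{p}^{(s)})^{\top}\mathbf{d}^{(s)}\bigr)\to 0$.

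Next I would pin down the two ingredients that this argument presupposes. \emph{Descent direction and finite backtracking:} for $h_L$ sufficiently small the local projection $\mathfrak{P}_{p_i^{(0)}}$ is a near-identity map on $\overline{\Star}(p_i^{(0)})$ (as noted after \Cref{def:localProj}), so the direction $\mathbf{d}^{(s)}$ in \cref{eq:ds} inherits from $F_i=-\partial U/\partial p_i$ the property $\nabla U(\mathbf{p}^{(s)})^{\top}\mathbf{d}^{(s)}<0$ whenever $\nabla U(\mathbf{p}^{(s)})\neq\mathbf{0}$; since $U$ is continuously differentiable, the Armijo condition then holds for all sufficiently small $\alpha$, so the reduction $\alpha^{(s)}_{k+1}=\rho\alpha^{(s)}_k$ terminates after finitely many steps with $\alpha^{(s)}>0$. \emph{Lipschitz gradient:} because $U$ is a finite sum of the smooth functions $\tfrac12(|p_j-p_i|-R_L)^2$ and the iterates remain in a bounded region of $|{\cal T}|$, the gradient $\nabla U$ is Lipschitz with a uniform constant $L$.

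With these in hand I would convert the summability into \cref{eq:VREMconverges}. Writing $\Delta^{(s)}:=\mathbf{p}^{(s+1)}-\mathbf{p}^{(s)}$, the near-identity property of $\mathfrak{P}$ gives, to leading order in $h_L$, both $\Delta^{(s)}=\alpha^{(s)}\mathbf{d}^{(s)}$ and $\mathbf{d}^{(s)}=-\nabla U(\mathbf{p}^{(s)})$, so that $-\nabla U(\mathbf{p}^{(s)})^{\top}\mathbf{d}^{(s)}$ is comparable to $\tfrac1{\alpha^{(s)}}\|\Delta^{(s)}\|^2$. The descent lemma for the $L$-Lipschitz gradient shows that the backtracking in (SPL-2) accepts $\alpha^{(s)}$ within a band bounded away from $0$ and above by a constant $\bar\alpha$ determined by $L$, $c$, and $\rho$, while the offset cap $\tfrac25\ell_i^{(s)}$ in (SPL-1) keeps $\alpha_0^{(s)}\|F_i^{(s)}\|$ controlled. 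Hence $U(\mathbf{p}^{(s)})-U(\mathbf{p}^{(s+1)})\ge \tfrac{c}{\bar\alpha}\|\Delta^{(s)}\|^2$, and summing over $s$ gives $\sum_s\|\Delta^{(s)}\|^2\le \tfrac{\bar\alpha}{c}\,U(\mathbf{p}^{(0)})<\infty$, whence $\Delta^{(s)}\to\mathbf{0}$ and in particular $p_i^{(s+1)}-p_i^{(s)}\to\mathbf{0}$ for every $p_i\in V_I({\cal T})$.

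The main obstacle I anticipate is the coupling of the nonlinear local projection $\mathfrak{P}$ with the line search: unlike textbook steepest descent, $\Delta^{(s)}$ is not exactly $\alpha^{(s)}\mathbf{d}^{(s)}$, because $\mathbf{d}^{(s)}$ is built from the fixed probe step $\alpha_0^{(s)}$ in \cref{eq:ds} whereas the accepted displacement uses the backtracked $\alpha^{(s)}$. Making the comparison between $-\nabla U^{\top}\mathbf{d}^{(s)}$ and $\|\Delta^{(s)}\|^2$ rigorous requires controlling the curvature of $\mathfrak{P}$ and the uniformity of the constants $L$ and $\ell_i^{(s)}$ across all iterations. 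This is precisely where the hypothesis that the maximum edge length is sufficiently small becomes indispensable: it guarantees that $\mathfrak{P}_{p_i^{(0)}}$ stays uniformly close to the identity on $\overline{\Star}(p_i^{(0)})$ and that the geometric quantities $\ell_i^{(s)}$ remain bounded above and below along the iteration, so that $\bar\alpha$ and $L$ can be taken independent of $s$.
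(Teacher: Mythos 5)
Your proposal is correct and follows essentially the same route as the paper's proof: both treat VREM as steepest descent with Armijo backtracking, use the near-identity of $\mathfrak{P}_{p_i^{(0)}}$ for small $h_L$ to make $\mathbf{d}^{(s)}$ a genuine descent direction, invoke the Lipschitz gradient to lower-bound the accepted step length, and conclude by a Zoutendijk-type summability argument. The only cosmetic difference is that the paper sums the sufficient-decrease inequality to get $\norm{\nabla U(\mathbf{p}^{(s)})}\to 0$ and then reads off \cref{eq:VREMconverges} from \cref{eq:VREM}, whereas you bound $\sum_s\norm{\mathbf{p}^{(s+1)}-\mathbf{p}^{(s)}}^2$ directly.
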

\begin{proof}
  As $h_L\to 0$, $\mathfrak{P}_{p_i^{(0)}}$ and $d_i^{(s)}$ in (\ref{eq:ds})
  converge to the identity and $F_i^{(s)}$, respectively; 
  then in \cref{eq:backtrackingCond}
  the angle $\beta^{(s)}$ between $\mathbf{d}^{(s)}$
  and $-\nabla U(\mathbf{p}^{(s)})$ converges to 0.
  Hence there exists $h_L^*>0$ such that for each $h_L<h_L^*$
   we have $\cos \beta^{(s)} > \delta > 0$.
  
  (SPL-1) precludes the coincidence
   of any two relocated vertices, 
   hence $U$ is $C^{2}$
   and $\nabla U$ is Lipschitz continuous
   with its Lipschitz constant $L>0$.
  Then the vector version of the fundamental theorem of calculus yields
  \begin{displaymath}
    \begin{array}{rl}
    & U(\mathbf{p}^{(s)}+\alpha \mathbf{d}^{(s)}) - U(\mathbf{p}^{(s)})
    =  \int_0^\alpha  \nabla U(\mathbf{p}^{(s)}
      + t \mathbf{d}^{(s)}) ^\top \mathbf{d}^{(s)} \, \dif t
      \\
    =& \alpha \nabla U(\mathbf{p}^{(s)})^\top \mathbf{d}^{(s)} 
       + \int_0^\alpha \left[ \nabla U(\mathbf{p}^{(s)}
       + t \mathbf{d}^{(s)})
       - \nabla U(\mathbf{p}^{(s)}) \right]^\top \mathbf{d}^{(s)}
       \, \dif t
      \\
    \le& \alpha \nabla U(\mathbf{p}^{(s)})^\top \mathbf{d}^{(s)} +
         \int_0^\alpha L t \|\mathbf{d}^{(s)}\|^2 \, \dif t. 
    \end{array}
  \end{displaymath}

  Consequently, we have 
  \begin{equation}
    \label{eq:quad}
    \forall \alpha \in \left[0, \alpha_0^{(s)}\right],\ 
    U(\mathbf{p}^{(s)}+\alpha \mathbf{d}^{(s)})\le U(\mathbf{p}^{(s)}) +
    \alpha\nabla U(\mathbf{p}^{(s)})^{\top}\mathbf{d}^{(s)}
    + \frac{1}{2}L\alpha^2\|\mathbf{d}^{(s)}\|^2. 
  \end{equation}

By \cref{def:VREM},
 the step length $\alpha^{(s)}$ is
 either the initial step length $\alpha_0^{(s)}$
 or obtained by reducing a previous step length
 that violates \cref{eq:backtrackingCond}.
We only prove the latter case
since the arguments for the former case are similar.

If $\alpha^{(s)} < \alpha_0^{(s)}$,
then the previous step must have
violated \cref{eq:backtrackingCond}:
\begin{equation}
\label{eq:failed_armijo}
    \begin{array}{l}
U\left(\mathbf{p}^{(s)}
  + \frac{1}{\rho}\alpha^{(s)}\mathbf{d}^{(s)}\right)
> U(\mathbf{p}^{(s)}) +
\frac{c}{\rho} \alpha^{(s)}\nabla U(\mathbf{p}^{(s)})^{\top} \mathbf{d}^{(s)}.
    \end{array}
\end{equation}

Set $\alpha = \frac{1}{\rho}\alpha^{(s)}$ in \cref{eq:quad},
 and we have, from \cref{eq:failed_armijo}, 
$$
\begin{array}{c}
\frac{c}{\rho} \alpha^{(s)}\nabla U(\mathbf{p}^{(s)})^{\top}
  \mathbf{d}^{(s)}
  <
\frac{1}{\rho} \alpha^{(s)}\nabla U(\mathbf{p}^{(s)})^{\top}\mathbf{d}^{(s)}
  +
  \frac{L}{2\rho^{2}}\alpha^{(s)2}\|\mathbf{d}^{(s)}\|^2, 
  \end{array}
  $$
  which yields
$\alpha^{(s)} > 2\rho \frac{c-1}{L}
\frac{\nabla U(\mathbf{p}^{(s)})^{\top} \mathbf{d}^{(s)}}{\norm{\mathbf{d}^{(s)}}^2}$.
%
This lower bound of $\alpha^{(s)}$
and \cref{eq:backtrackingCond} give
\begin{displaymath}
  U(\mathbf{p}^{(s)}) - U(\mathbf{p}^{(s+1)}) \ge
  c \left( 2\rho \frac{1-c}{L}
    \frac{\nabla U(\mathbf{p}^{(s)})^{\top} \mathbf{d}^{(s)}}
    {\norm{\mathbf{d}^{(s)}}^2} \right)
  \nabla U(\mathbf{p}^{(s)})^{\top} \mathbf{d}^{(s)}.
\end{displaymath}
Replace $\nabla U(\mathbf{p}^{(s)})^{\top} \mathbf{d}^{(s)}$ with
$-\cos\beta^{(s)} \norm{\nabla U(\mathbf{p}^{(s)})}
\norm{\mathbf{d}^{(s)}}$ 
and we have
\begin{equation}
\label{eq:final_inequality}
U(\mathbf{p}^{(s)}) - U(\mathbf{p}^{(s+1)}) \ge
\frac{2c \rho (1-c)}{L} \cos^2\beta^{(s)} \norm{\nabla U(\mathbf{p}^{(s)})}^2.
\end{equation}

Write $C_{\rho} := \frac{2c \rho (1-c)}{L}>0$.
Summing \cref{eq:final_inequality} over iterations from $s=0$ to $S$
 yields
 \begin{displaymath}
U(\mathbf{p}^{(0)}) - U(\mathbf{p}^{(S+1)}) =
\sum_{s=0}^S (U(\mathbf{p}^{(s)}) - U(\mathbf{p}^{(s+1)})) \ge
  C_{\rho} \sum_{s=0}^S \cos^2\beta^{(s)} \norm{\nabla U(\mathbf{p}^{(s)})}^2.
 \end{displaymath}
Since $U$ is bounded below by 0, we have $U(\mathbf{p}^{(S+1)}) \ge 0$.
Thus, for any $S$:
$$
U(\mathbf{p}^{(0)})  \ge C_{\rho} \sum_{s=0}^S \cos^2\beta^{(s)}
\norm{\nabla U(\mathbf{p}^{(s)})}^2.
$$

As $S \to \infty$, $U(\mathbf{p}^{(0)})$ remains constant.
Hence the monotonically increasing series on the right-hand side
 must converge, implying 
$\sum_{s=0}^{\infty} \cos^2\beta^{(s)}
\norm{\nabla U(\mathbf{p}^{(s)})}^2 < \infty$.
Since $\cos\beta^{(s)} > \delta$, we have
$
\sum_{s=0}^{\infty} \norm{\nabla U(\mathbf{p}^{(s)})}^2 < \infty,
$
which yields
$
\norm{\nabla U(\mathbf{p}^{(s)})} \rightarrow 0.
$
The proof is then completed by \cref{eq:VREM}.
\end{proof}

\subsection{Local triangulation regeneration (LTR)}
\label{subsec:regenerate}

Although VREM converges,
 the first condition in (\ref{eq:minimizationPE})
 implies that the adjusted mesh
 be homeomorphic to the input mesh $\mathcal{T}$. 
As shown in \cref{fig:regenerate}(a),
 if $\mathcal{T}$ has a pathological arrangement, 
 this homeomorphism may lead to a failure of VREM
 in fulfilling the $\theta$-regularity. 
This limitation of VREM is resolved
 by replacing ${\cal T}$
 with a regenerated triangular mesh $\mathcal{T}'$
 that has a simpler topology as well as a suitable number of vertices.

The algorithmic steps of LTR are formalized in \cref{alg:regenerating}. 
In lines 1--6,
 the input triangle violating
 the $(r_{\mathrm{tiny}}, h_{L}, \theta)$-regularity
 is combined with some neighboring triangles 
 to form a triangular mesh $\mathcal{T}$.
In lines 7--8, we fit a plane $\Gamma$ to all vertices of $\mathcal{T}$
 and project the boundary edges of ${\cal T}$ onto $\Gamma$
 to obtain a planar polygon $\Lambda$.
In lines 12--13, we scatter $m^{*}$ points randomly inside $\Lambda$
 and obtain $\mathcal{T}''$ as
 the Delaunay triangulation of the vertices of $\Lambda$
 and the scattered points.
This step comes from the well-known fact
 that Delaunay triangulation maximizes
 the smallest angle in the triangular mesh
 \cite[Thm. 9.9]{BeCh10}. 
In lines 14--17, 
 vertices of $\mathcal{T}''$ are projected onto
 $|\mathcal{T}|$ in the same manner as that in (LPT-3,4); 
 this updates the vertex positions of ${\cal T}''$
 but preserves its topology. 
In line 18, we employ VREM in \cref{alg:relocating}
 to further improve the quality of the regenerated
 triangulation $\mathcal{T}'$.
To alleviate the random effects in point scattering,
 we repeat the above procedure multiple times
 and retain the regenerated triangular mesh
 with the largest minimum angle; 
 see lines 19--21
 and the two nested ``for'' loops starting at lines 10 and 11.
If the mesh remains unsatisfactory 
 after finishing the outermost iteration in lines 5--27,
 we identify a larger triangulation by the BFS
 and repeat the LTR steps
 until the maximum number $\mu$ of BFS
 is exceeded or
 the $(r_{\mathrm{tiny}}, h_{L}, \theta)$-regularity is satisfied. 
 
The positive integer $m_{\mathrm{est}}^*$ in line 9
 is determined as follows.
Denote by $(x_i, y_i)_{i=1}^{m}$
  vertices of the planar polygon $\Lambda$.
In the ideal case of
 $\Lambda$ being triangulated into equilateral triangles, 
 each interior vertex is surrounded by six equilateral triangles, 
 occupying a third of the area of each triangle.
Hence the area occupied by each interior vertex is
 $6\times \frac{1}{3}\times \frac{\sqrt{3}}{4}h_{\triangle}^2
 =\frac{\sqrt{3}}{2}h_{\triangle}^2$
 where $h_{\triangle}$ is the edge length; 
 see \cref{fig:regenerate}(c).
In contrast, 
 the area occupied by a boundary vertex 
 depends on the interior angle $\gamma_i$ of the polygon
 at that vertex, and is calculated by 
 $\frac{1}{3} \times \frac{\gamma_i}{\frac{\pi}{3}}  
 \times\frac{\sqrt{3}}{4}h_{\triangle}^2$.
 Since the interior angles sum up to $\sum_{i=1}^m \gamma_i=(m-2)\pi$,
 the total area occupied by the $m^*$ scattered vertices is
 \begin{displaymath}
   \begin{array}{l}
     S_c := \sum\limits_{i=1}^m \frac{1}{3}
     \times\frac{\gamma_i}{\frac{\pi}{3}}
     \times \frac{\sqrt{3}}{4} h_{\triangle}^2
     + m^* \times \frac{\sqrt{3}}{2} h_{\triangle}^2
     =  \frac{\sqrt{3}}{4}(m-2) h_{\triangle}^2
     +  \frac{\sqrt{3}}{2}m^*  h_{\triangle}^2.     
   \end{array}
 \end{displaymath}
On the other hand,
 the actual area $S_{\Lambda}$ of the polygon $\Lambda$ is given by Green's formula
 \begin{displaymath}
 \begin{array}{c}
 S_{\Lambda} := \frac{1}{2} \left\vert \sum\limits_{i=1}^m
    (x_i + x_{i+1})(y_{i+1} -  y_i )\right\vert,
   \end{array}
 \end{displaymath}
where $(x_{m+1},y_{m+1}) = (x_1, y_1)$.
Then $S_c = S_{\Lambda}$ yields an accurate estimate of $m^{*}$ as 
 \begin{equation}
   \label{eq:m-estimated}
   \begin{array}{l}
     m_{\mathrm{est}}^* := \max\left\{0, 
     1+\text{round}\left(\frac{2}{ \sqrt{3}h_{\triangle}^2}S_{\Lambda}
     - \frac{1}{2}m\right)\right\}, 
   \end{array}
 \end{equation}
 where $\text{round}(x)$ returns the integer closest to $x$
 and, in practice, $h_{\triangle}$ is set to
 the average edge length of $\Lambda$.

Consider the example in \cref{fig:regenerate}(a). 
For $\mathrm{iter}=1$,
 ${\cal T}$ at line 5 is
 the submesh inside the inner rectangle, 
 for which no regenerated mesh is satisfactory. 
For $\mathrm{iter}=2$, ${\cal T}$ is expanded 
 to include all triangles in \cref{fig:regenerate}(a)
 and $m^{*}=2$ leads to 
 the regenerated mesh in \cref{fig:regenerate}(b)
 with a simpler topology and fewer vertices.
In all our numerical tests,
 \cref{alg:regenerating} always succeeds 
 for $\theta=\frac{\pi}{10}$ with $(\mu, \nu, \eta)= (4, 10, 3)$. 
 

\begin{figure}
\centering
\label{fig:regenerate}
\subfigure[a triangular mesh]
{\includegraphics[width=0.32\textwidth]{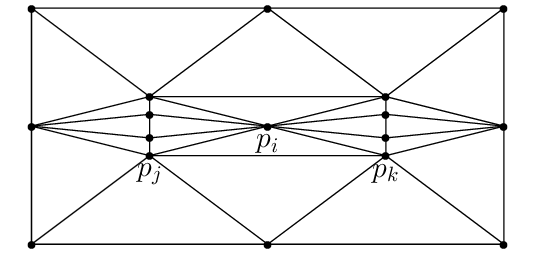}}
\subfigure[the regenerated mesh]
{\includegraphics[width=0.32\textwidth]{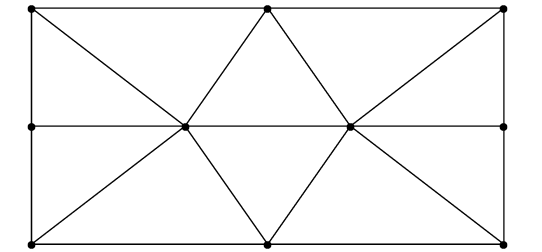}}
\subfigure[occupied areas of vertices]{
\includegraphics[width=0.30\textwidth]{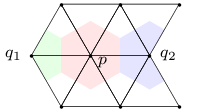}}
\caption{Illustrations of LTR.
  In (a),
  the pathological arrangement
  leads to an unsatisfactory result of VREM with
  $\mu=4$ and $\nu=10$.
  In (b),
  LTR with $\mu=4$, $\nu=10$, $\eta = 3$, and $\theta=\frac{\pi}{10}$
  generates a new triangulation with a simpler topology
  and fewer vertices.
  In (c), the green, red, blue
   regions represent the areas occupied by vertices
   $q_1$, $p$ and $q_2$, respectively.
}
\end{figure}

\begin{algorithm}
\caption{\texttt{LTR}}
\label{alg:regenerating} 
\begin{algorithmic}[1] 
  \Input{
    ${\cal T}_{\partial {\cal M}}$: 
    a triangular mesh that represents the boundary $\partial \mathcal{M}$
    of a Yin set ${\cal M}$;
    \\ \hspace{5mm}
    $(r_{\mathrm{tiny}}, h_{L}, \theta)$:
    regularity parameters;
    \\ \hspace{5mm}
    $\triangle_{p_i p_j p_k}$: a triangle  
    in ${\cal T}_{\partial \mathcal{M}}$ that
    violates the $(r_{\mathrm{tiny}}, h_{L}, \theta)$-regularity;
    \\ \hspace{5mm}
    $\mu$: the maximum number of BFSs;
    \\ \hspace{5mm}
    $\nu$: the maximum number of iterations of VREM; 
    \\ \hspace{5mm}
    $\eta$: the maximum number of randomly scattering points.}
  \PreConditions{ $h_L$ is sufficiently small.}
  \PostConditions{${\cal T}_{\partial {\cal M}}$
    is updated with a more uniform distribution of edge lengths.
    }
  \STATE{$\mathcal{T} \leftarrow$ an empty triangular mesh;}
  \STATE{$V \leftarrow$ a vertex set $\{p_{i}\}$,
    where the interior angle at $p_i$
    is the largest in $\triangle_{p_i p_j p_k}$;}
  \STATE{isRegular $\leftarrow$ False, iter $\leftarrow$ 1;} 
  \WHILE{isRegular == False
      \text{and} iter $<$= $\mu$}
  \STATE{Add to $\mathcal{T}$
    all triangles in ${\cal T}_{\partial {\cal M}}$
    that are adjacent to at least one vertex in $V$;}
  \STATE{$V \leftarrow$ the vertex set of $\mathcal{T}$;}
  \STATE{Fit a plane $\Gamma$ to the vertices of $\mathcal{T}$
   in the same manner as that in (LPT-1);}
 \STATE{$\Lambda \leftarrow {\cal P}_{\Gamma}(\partial|{\cal T}|)$
   where 
   ${\cal P}_{\Gamma}$ is the projector defined in (LPT-2);}
  \STATE{Initialize another triangular mesh
    $\mathcal{T}' \leftarrow \mathcal{T}$;}
  \FOR{$m^{*}=\max\{m_{\mathrm{est}}^*-2,0\}$ to
  $m_{\mathrm{est}}^*+2$
  with $m_{\mathrm{est}}^*$ given in (\ref{eq:m-estimated})}
\FOR{$j = 1$ to $\eta$}
\STATE{Randomly scatter $m^{*}$ points
  inside the planar polygon $\Lambda$;}
\STATE{$\mathcal{T}'' \leftarrow$
  Delaunay triangulation of vertices of $\Lambda$
  and the scattered points;}
  \FOR{each $q \in V(\mathcal{T}'')$}
  \STATE{Find all $p \in V(\mathcal{T})$ satisfying 
    $q \in (\mathfrak{P}^{\mathcal{T}}_{p})^{-1}(\overline{\Star}(p))$, 
    cf. Definition \ref{def:localProj};}
  \STATE{$q\leftarrow \mathfrak{P}^{\mathcal{T}}_{p}(q)$, 
    where $p$ incurs the shortest projection distance for $q$;}
  \ENDFOR
  \STATE{Apply lines 5--13 of \cref{alg:relocating}
    to $\mathcal{T}''$ with ($\mu,\nu$);}
\IF{$\mathcal{T}''$ satisfies
  the $(r_{\mathrm{tiny}},h_{L},\theta)$-regularity and
  the minimum angle of $\mathcal{T}''$ is greater than that of
  $\mathcal{T}'$ }
\STATE{$\mathcal{T}' \leftarrow \mathcal{T}''$, isRegular $\leftarrow$ True;}
  \ENDIF
  \ENDFOR
  \ENDFOR
  \IF{isRegular $==$ True}
  \STATE{Replace $\mathcal{T}$ in $\mathcal{T}_{\partial \mathcal{M}}$
    by $\mathcal{T}'$;}
  \ENDIF
 \STATE{iter $\leftarrow$ iter+1;}
  \ENDWHILE
\end{algorithmic}
\end{algorithm}




\section{Tests}
\label{sec:tests}
In this section,
 we test the proposed method by several benchmark problems
 and compare its results with those of other IT methods
 in the literature.
 
In most tests, the initial Yin set   
 is a sphere approximated by a triangular mesh
 with uniformly distributed markers. 
Advected by the flow map of a smooth velocity field,
 the triangular mesh undergoes nontrivial deformations
 during the simulation
 and returns to its initial position at the final time.
All flow maps are approximated
 by the classic fourth-order Runge-Kutta method,
 which is chosen solely based on ease of implementation;  
the convergence rates of our 3D linear MARS method
 would be the same
 if another second- or third-order time integrator were employed.

As the grid size is reduced, 
 computing IT errors
 by the symmetric difference in (\ref{equ:totalerror})
 tends to be more and more ill-conditioned.
Instead, in our tests we calculate 
\begin{equation}
  \label{equ:testerrornorms}
  \begin{array}{c}
    E_1(T) := \frac{1}{N(T)}
    \sum_{i=1}^{N(T)} \vert D_i(T)-R\vert, 
  \end{array}
\end{equation}
where $T$ is the final time of simulation,
$R$ the exact radius of the sphere at $T$,
$N(T)$ the number of markers, 
and $D_i(T)$  the distance between the $i$th marker
and the sphere center $C$.
To compare our results with those of other IT methods
 such as VOF or MOF methods,
 we also calculate a geometric error as
\begin{equation}
  \label{eq:geometricIT-error}
  E_{g}(T) := 4\pi R^2 E_{1}(T), 
\end{equation}
where $4\pi R^2$
 is the exact surface area of the sphere at $T$.
Whenever the exact final solution is a sphere,
 (\ref{eq:geometricIT-error}) serves
 as a well-conditioned discrete counterpart of \cref{equ:totalerror}.

\subsection{Vortical shear of a sphere}
\label{subsec:shear}

Referring to \Cref{def:it}, 
 the flow map of this test \cite{LiRu06} is that of the ODE
 $\frac{\dif \mathbf{x}}{\dif t}=\mathbf{u}(\mathbf{x},t)$
 with $\mathbf{u}:=(u_x,u_y,u_z)^{\top}$ 
 given by 
\begin{equation}
  \label{equ:vorticalflow}
  \left\{
    \begin{array}{l}
      u_{x}(x, y, z, t) = 2\sin ^2(\pi x)\sin(2\pi y) 
      \cos\left(\frac{t}{T} \pi\right);\\
      u_{y}(x, y, z, t) = -\sin(2\pi x) \sin ^2(\pi y)
      \cos\left(\frac{t}{T} \pi\right); \\
      u_{z}(x, y, z, t) = (1-2r)^{2}
      \cos\left(\frac{t}{T}\pi\right),
    \end{array}
  \right. 
\end{equation}
 where $r = \sqrt{(x-0.5)^2+(y-0.5)^{2}}$
 is the distance from the vortex axis.
Values of other parameters for this test
 are displayed in \cref{tab:vorticalshearset}.
At time $t=\frac{T}{2}$,
 the velocity field is reversed by the cosinusoidal factor
 $\cos\left(\frac{t}{T}\pi\right)$
 so that the exact final solution at $T = 3$
 coincides with the initial sphere.

 \begin{table}
   \centering
   \caption{Values of parameters for the vortical shear of a sphere.}
   \label{tab:vorticalshearset}
   \begin{tabular}{c|c}
     \hline
     Parameters & Values\\
     \hline
     computational domain & $\Omega = \left[ 0,1 \right]^{3}$\\
     initial/final sphere
                & $C = (0.5, 0.75, 0.25)^{\top}$; $R = 0.15$\\
     velocity field $\mathbf{u}$ 
     & that in (\ref{equ:vorticalflow}) \\
     simulation interval  & $ t\in [ 0, T]$; $T=3$ \\
     Eulerian grid size & $h=\frac{1}{32}, \frac{1}{64}, \frac{1}{128}$\\
     uniform time step size $k$ 
                & $\mathrm{Cr} := \frac{\|\mathbf{u}\|_{\infty}}{h}k = 0.5$
     \\
     $(r_{\mathrm{tiny}},h_{L},\theta)$-regularity
                & $ r_{\mathrm{tiny}}=0.1$; 
                  $h_{L} = 0.5 h, 6h^{\frac{3}{2}}$; 
                  $\theta = \frac{\pi}{10}$
     \\
     VREM & $\mu = 4$; $\nu = 10$; $c= 10^{-4}$; $\rho=0.8$\\
     LTR & $\mu = 4$; $\nu = 10$; $\eta = 3$\\
     \hline
   \end{tabular}
 \end{table}

\begin{figure}
  \label{fig:vorticalshear}
  \centering
  \subfigure[$t=0$]{\includegraphics[height=0.25\textwidth]{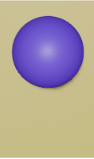}}
  \subfigure[$t=0.6$]{\includegraphics[height=0.25\textwidth]{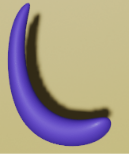}}
  \subfigure[$t=1.5$]{\includegraphics[height=0.25\textwidth]{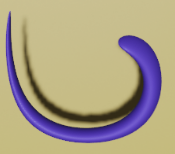}}
  \subfigure[$t=T=3$]{\includegraphics[height=0.25\textwidth]{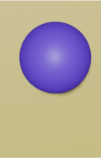}}
  \caption{Results of the linear MARS method
    for solving the vortical shear test
    in \Cref{tab:vorticalshearset} 
    with $h = \frac{1}{64}$ and $h_L=0.5h$.
    The distances between adjacent markers at the initial time $t=0$
    are set to the uniform constant $0.25h$.}
\end{figure}

\begin{table}
  \label{tab:vorticalshearres}
\centering
\caption{IT errors and convergence rates of the 3D linear MARS method
  and those of VOF methods in solving the vortical shear tests
  in \Cref{tab:vorticalshearset}. 
  Results of VOF methods are taken from
  \cite[Tab. 5]{LiRu06} and \cite[Tab. 2]{CoSp21}. 
  In each of the six names,
  the acronyms before and after the slash
  refer to its advection and reconstruction schemes, respectively; 
  CCU, MYC, LSF, EDS, PCFSC and CVTNA stand for
  the cellwise conservative unsplit scheme \cite{CoSp21},
  the mixed Youngs-centered scheme \cite{AuMa07},
  the least-square fit method \cite{ScZa03},
  the Eulerian directional-split scheme \cite{LiRu06},
  the piecewise-constant flux surface calculation scheme
  \cite{LiRu06},
  and the centroid-vertex triangle-normal averaging scheme \cite{LiRu06}, respectively.
}
\begin{tabular}{c|ccccc}
  \hline \hline
  Results based on $E_{g}$ in (\ref{eq:geometricIT-error})
  & $h=\frac{1}{32}$ & rate & $h=\frac{1}{64}$ & rate &  $h=\frac{1}{128}$
  \\
  \hline 
  linear MARS ($h_{L} = 0.5h$) &    1.44e-4 &       2.01 &   3.60e-5  & 2.01 &   8.93e-6 \\
  linear MARS ($h_{L} = 6h^{\frac{3}{2}}$) & 5.68e-4 &       2.93 &   7.43e-5 &       3.02 &   9.16e-6  \\
  \hline
  CCU/Youngs \cite{CoSp21} & 3.76e-3 & 1.72 & 1.14e-3 & 1.73 & 3.45e-4 \\
  CCU/MYC \cite{CoSp21} & 3.71e-3 & 1.81 & 1.06e-3 & 1.99 & 2.66e-4 \\
  CCU/LSF \cite{CoSp21} & 1.68e-3 & 1.88 & 4.57e-4 & 1.37 & 1.77e-4   \\
  EDS/Youngs \cite{LiRu06} & 3.42e-3 &1.58&1.14e-3&1.60&3.76e-4\\
  PCFSC/Youngs \cite{LiRu06} &3.39e-3&1.60&1.12e-3&1.63&3.61e-4\\
  PCFSC/CVTNA \cite{LiRu06} &2.86e-3&2.00&7.14e-4&2.19&1.56e-4\\
  \hline \hline
\end{tabular}


\end{table} 

In \cref{fig:vorticalshear},
 we show results of the 3D linear MARS method at several key moments
 during the evolution of the sphere. 
The initial and final spheres are visually indistinguishable. 
Throughout the simulation,
 the topology of the sphere remains the same
 and the $(r_{\mathrm{tiny}}, h_{L}, \theta)$-regularity
 is always fulfilled.
Specifically, at the end of each time step
 all edge lengths are within the interval
 $ \left[\frac{1}{10} h_L, \,h_L \right]$
 and no interior angle of any triangle is less than $\frac{\pi}{10}$.
These results demonstrate 
 the effectiveness of the proposed mesh adjustment algorithms, 
 i.e., VREM in \Cref{subsec:relocate} and LTR in \Cref{subsec:regenerate}.

IT errors and convergence rates of the 3D linear MARS method
 are displayed in \cref{tab:vorticalshearres}, 
 where the second- and third-order convergence rates
 are clearly demonstrated
 for $h_{L}=O(h)$ and $h_L=O(h^{\frac{3}{2}})$, respectively.
It is also demonstrated in \cref{tab:vorticalshearres}
 that our method is more accurate
 than other state-of-the-art IT methods \cite{CoSp21, LiRu06} 
 by at least one order of magnitude.
In particular, 
 the choice $h_{L} = O(h^{\frac{3}{2}})$
 yields third-order IT accuracy
 and an excellent balance between the main flow solver and the IT method.
As discussed at the end of \Cref{subsec:3DlinearMARS}, 
 the complexity of a 3D main flow solver is at least 
 $O(\frac{1}{kh^3}) = O(\frac{1}{h^{4}})$
 while that of a 3D MARS method is $O(\frac{1}{h^{1+2\alpha}})$. 
Since the operations per control volume
 in advancing the Navier-Stokes equations
 are more complex than those of IT,
 the CPU time consumed by the third-order MARS method
 with $\alpha=\frac{3}{2}$
 is dominated by that of the main flow solver. 

To sum up,
 the proposed MARS method is highly accurate,
 is flexible of choosing $\alpha$ in $h_L=O(h^{\alpha})$, 
 and preserves topological structures
 and geometric features with ease.
These advantages are natural consequences of our principle
 of handling topological and geometric problems
 with tools in topology and geometry. 

\subsection{Deformation of a sphere}

The details on the setup of this test \cite{LeVe96} 
 are summarized in \Cref{tab:deformationTestSetup},
 where the velocity field
 $\mathbf{u}=(u_x,u_y,u_z)^{\top}$ is given by 
\begin{equation}
  \label{eq:deformationVel}
  \left\{
    \begin{array}{l}
      u_{x}(x, y, z, t) =\hspace{0.8mm} 2\sin ^2(\pi x)\sin(2\pi y) \sin(2\pi z)
      \cos\left(\frac{t}{T}\pi\right);\\
      u_{y}(x, y, z, t) = -\sin ^2(\pi y)\sin(2\pi z) \sin(2\pi x)
      \cos\left(\frac{t}{T}\pi\right);\\
      u_{z}(x, y, z, t) = -\sin ^2(\pi z)\sin(2\pi x) \sin(2\pi y)
      \cos\left(\frac{t}{T}\pi\right)
    \end{array}
  \right.  
\end{equation}
and the cosinusoidal factor
reverses the direction of the velocity field at $t=\frac{T}{2}$
so that the Yin set at $t=T$ is exactly the same as
the initial ball at $t=0$.

\begin{table}
  \centering
  \caption{Values of parameters for the deformation of a sphere.}
  \label{tab:deformationTestSetup}
  \begin{tabular}{c|c}
    \hline
    Parameters& Values\\
    \hline
    computational domain & $\Omega = \left[ 0,1 \right]^{3}$\\
    initial/final sphere
              & $C = (0.35, 0.35, 0.35)^{\top}$; $R = 0.15$\\
    velocity field $\mathbf{u}$ 
              & that in (\ref{eq:deformationVel}) \\
    simulation interval  & $ t\in [ 0, T]$; $T=3$ \\
    Eulerian grid size & $h=\frac{1}{32}, \frac{1}{64},
                         \frac{1}{128}, \frac{1}{256}$\\
    uniform time step size $k$ 
              & $\mathrm{Cr} := \frac{\|\mathbf{u}\|_{\infty}}{h}k = 0.5$
    \\
    $(r_{\mathrm{tiny}},h_{L},\theta)$-regularity
              & $ r_{\mathrm{tiny}}=0.1$; 
                $h_{L} = 0.5 h, 6h^{\frac{3}{2}}$;
                $\theta = \frac{\pi}{10}$
    \\
    VREM & $\mu = 4$; $\nu = 10$; $c= 10^{-4}$; $\rho=0.8$\\
     LTR & $\mu = 4$; $\nu = 10$; $\eta = 3$\\
    \hline
  \end{tabular}
\end{table}
 
\begin{table}
  \label{tab:vortexres}
  \centering    
  \caption{IT errors and convergence rates of the 3D linear MARS method
    and VOF/MOF methods in solving the deformation tests
    in \Cref{tab:deformationTestSetup}.
    Results of the six geometric VOF methods are taken from
    \cite[Tab. 7]{LiRu06} and \cite[Tab. 3]{CoSp21}
    while those in the last three lines from \cite[Tab. 3]{AsMe20} and
    \cite[Tab. 6]{MuMe22},
    where the acronyms MOF, CLSVOF, HyMOFLS
    stand for the moment-of-fluid method \cite{AhSh09},
    the coupled level set and volume-of-fluid method \cite{SuPu00},
    and the hybrid moment of fluid--level set method \cite{MuMe22}.}
  \small
   \begin{tabular}{c|ccccccc}
     \hline \hline
   Results based on $E_{g}$ 
     & $h=\frac{1}{32}$ & rate & $h=\frac{1}{64}$ & rate &  $h=\frac{1}{128}$
     & rate &  $h=\frac{1}{256}$\\
     \hline 
   linear MARS ($h_{L} = 0.5h$)
     &   1.48e-4 &     2.00& 3.73e-5&     1.99 & 9.42e-6& 2.00&
  2.35e-6\\
   linear MARS ($h_{L} = 6h^{\frac{3}{2}}$)
     & 6.33e-4 & 2.99 & 7.97e-5 &  2.97 & 1.02e-5 &3.01&
  1.27e-6\\
      \hline
   CCU/Youngs \cite{CoSp21}
  & 7.21e-3 & 1.46 & 2.63e-3 & 1.97 & 6.73e-4 & 1.65 & 2.14e-4 \\
  CCU/MYC \cite{CoSp21}
  & 6.94e-3 & 1.50 & 2.46e-3 & 2.26 & 5.13e-4& 2.58 & 8.59e-5  \\
  CCU/LSF \cite{CoSp21}
  & 6.11e-3 & 1.86 & 1.68e-3 & 2.22 & 3.59e-4 & 2.25 & 7.54e-5  \\
  EDS/Youngs \cite{LiRu06}&7.71e-3&1.47&2.78e-3&1.87&7.58e-4&1.68&2.37e-4\\
  PCFSC/Youngs \cite{LiRu06} &7.86e-3&1.43&2.91e-3&1.98&7.36e-4&1.70&2.26e-4\\
   PCFSC/CVTNA \cite{LiRu06} &7.41e-3&1.90&1.99e-3&2.69&3.09e-4&2.14&7.03e-5\\
   \hline
   MOF \cite{AsMe20} & 5.62e-3 &1.26&2.35e-3&2.13&5.38e-4&1.58&1.80e-4\\
   \hline
  CLSVOF \cite{AsMe20}
     & 7.77e-3 & 1.15 & 3.50e-3 & 2.18 & 7.72e-4& 1.72 & 2.35e-4  \\
   \hline
   HyMOFLS \cite{MuMe22}
   &5.80e-3&1.57&2.30e-3&2.14&5.23e-4&1.45&1.92e-4\\
     \hline \hline
   \end{tabular}


\end{table}

\begin{figure}
  \label{fig:vortex}
  \centering
  \subfigure[$t=0$]{\includegraphics[height=0.301\textwidth]{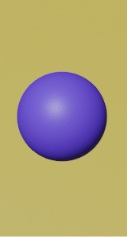}}
  \subfigure[$t=0.375$]{\includegraphics[height=0.301\textwidth]{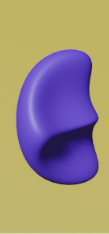}}
  \subfigure[$t=0.75$]{\includegraphics[height=0.301\textwidth]{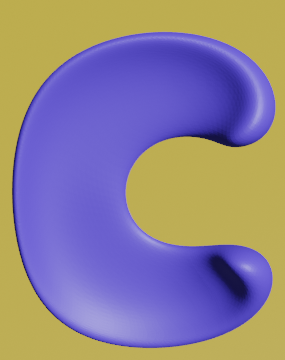}}
  \subfigure[$t=1.5$]{\includegraphics[height=0.301\textwidth]{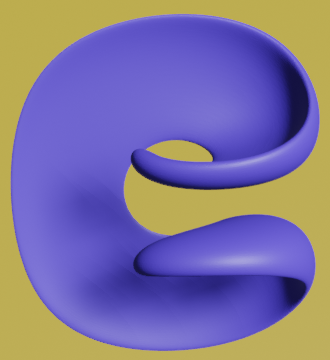}}
  \subfigure[$t=T=3$]{\includegraphics[height=0.301\textwidth]{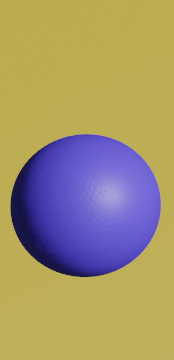}}
  \caption{Results of the 3D linear MARS method
    in solving the deformation test in \Cref{tab:deformationTestSetup}
    with $h = \frac{1}{64}$ and $h_L=0.5h$.
    The initial distances between adjacent markers
    are set to the uniform $0.25h$.}
\end{figure}

As shown in \cref{fig:vortex}(d),
 the initial sphere deforms into a very thin film at $t=\frac{T}{2}$, 
 posing a challenge to VOF and MOF methods
 in preserving topological structures and geometric features.
As demonstrated in \cite[Fig. 8]{BoSc22}, 
 VOF and MOF methods tend to develop holes
 on the thin film,
 altering the topology;  
 the final results of these methods also have
 geometric features that are prominently different
 from those of the initial sphere.
In comparison,
 at each time step of the 3D linear MARS method,
 the evolving Yin set remains homeomorphic to the initial sphere,
 preserving the topological invariant; 
 also, the initial and final spheres in \cref{fig:vortex}(a,e)
 are visually indistinguishable.
Furthermore,
 the triangular mesh of the 3D linear MARS method
 at the end of each time step 
 always satisfies 
 the $(r_{\mathrm{tiny}},h_{L},\theta)$-regularity
 specified in \Cref{tab:deformationTestSetup}.

Errors and convergence rates of our 3D MARS method
 are displayed in \cref{tab:vortexres},
 where the second- and third-order convergence rates
 of our method are clearly demonstrated
 for the choices of $h_{L}=O(h)$ and $h_L=O(h^{\frac{3}{2}})$,
 respectively.
We also collect in \cref{tab:vortexres}
 errors and convergence rates
 of other state-of-the-art IT methods
 available in the literature \cite{CoSp21, AsMe20, LiRu06}. 
On the finest grid,
 the proposed second-order linear MARS method
 is more accurate than other IT methods
 by a factor of at least 30.

\subsection{Deformation of an armadillo
   with complex geometry}
 \label{sec:deform-an-armad}

This test is the same as that in \Cref{tab:deformationTestSetup}
 except that the sphere at the initial time $t=0$
 has been replaced by an armadillo with complex geometry
 in \Cref{fig:armadillo}(a),
 of which the triangular mesh is obtained
 from the Stanford 3D
 scanning repository \cite{StAr}.  
 
 \begin{figure}
  \label{fig:armadillo}
  \centering
  \subfigure[$t=0$]{\includegraphics[width=0.312\textwidth]{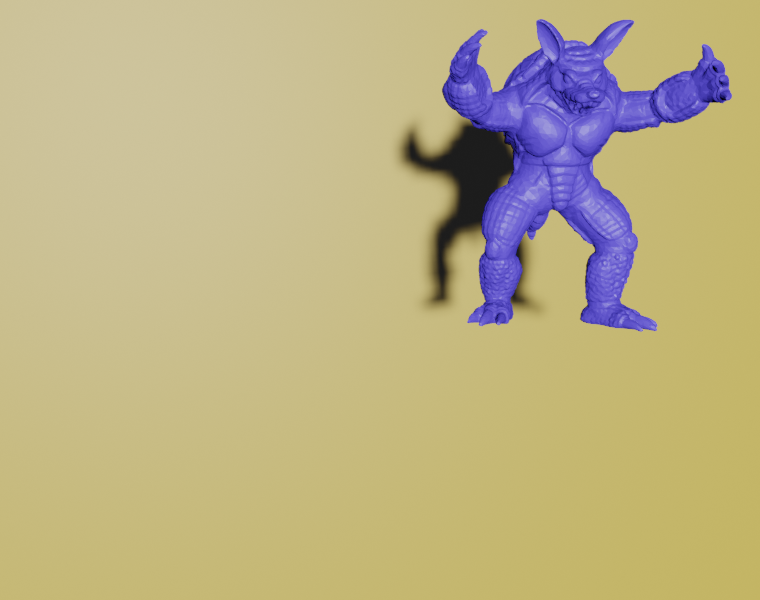}}
  \subfigure[$t=0.3$]{\includegraphics[width=0.312\textwidth]{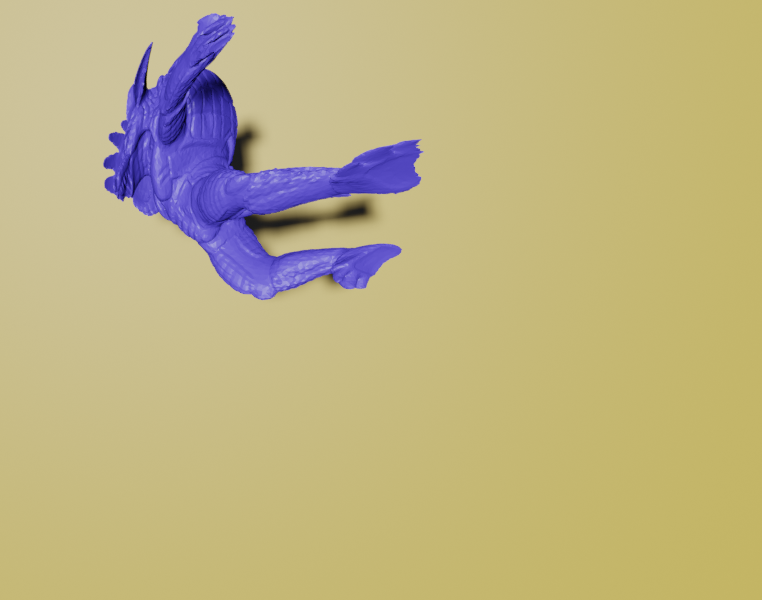}}
  \subfigure[$t=0.6$]{\includegraphics[width=0.312\textwidth]{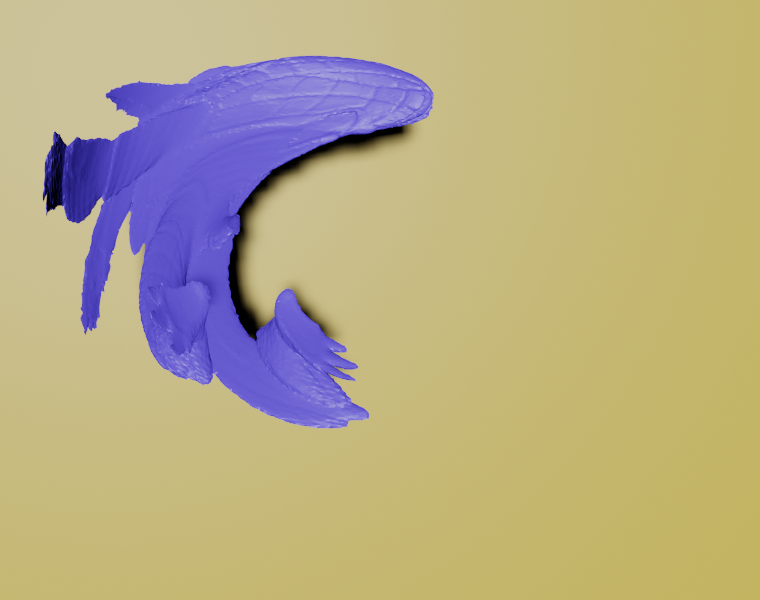}}
  
 \subfigure[$t=0.9$]{\includegraphics[width=0.312\textwidth]{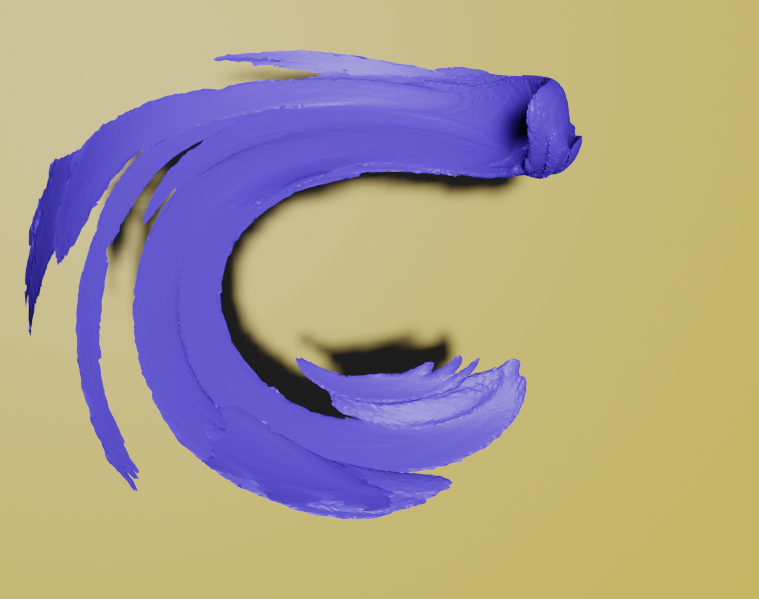}}
  \subfigure[$t=\frac{T}{2}=1.5$]{\includegraphics[width=0.312\textwidth]{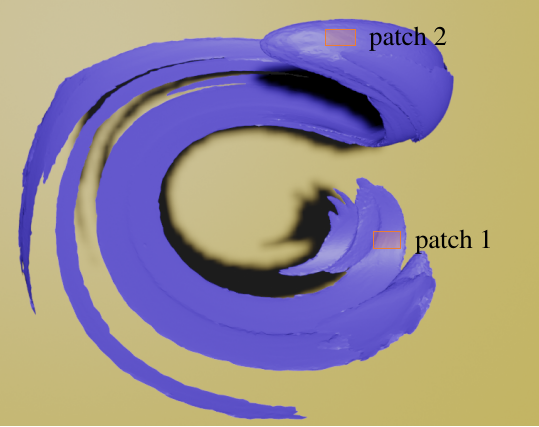}}
  \subfigure[$t=T=3$]{\includegraphics[width=0.312\textwidth]{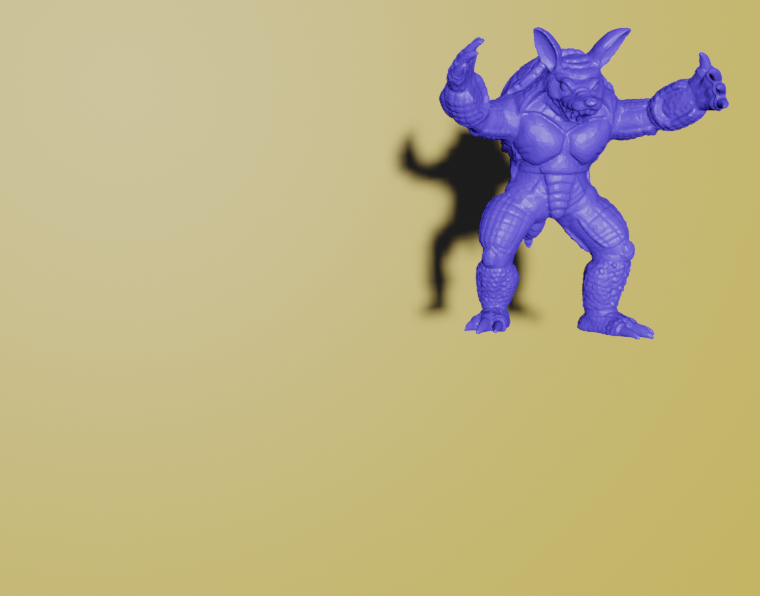}}
  \caption{Results of the 3D linear MARS method
    in solving the deformation test in \Cref{tab:deformationTestSetup}
    with $h = \frac{1}{128}$ and $h_L=0.5h$.
    See \cref{fig:armadillocomparison}(c,d)
    for zoom-in of patch 1 and patch 2 in subplot (e).
  }
\end{figure} 

\begin{figure}
  \label{fig:armadillocomparison}
  \centering
  \subfigure[patch 1: without VREM and LTR]
  {\includegraphics[width=0.45\textwidth]{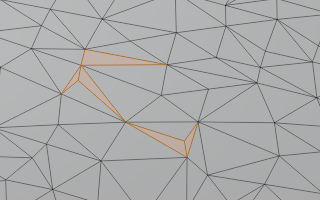}}
  \hfill
  \subfigure[patch 2: without VREM and LTR]
  {\includegraphics[width=0.45\textwidth]{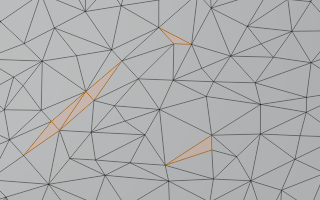}}

  \subfigure[patch 1: with VREM and LTR]
  {\includegraphics[width=0.45\textwidth]{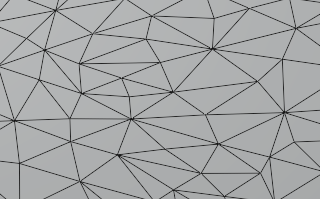}}
  \hfill
  \subfigure[patch 2: with VREM and LTR]
  {\includegraphics[width=0.45\textwidth]{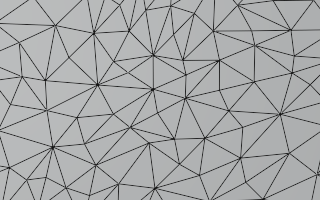}}
  \caption{Comparing mesh quality of the two surface patches
    marked in \cref{fig:armadillo}(e)
    for the armadillo at $t=\frac{T}{2}$. 
    Results in (c,d) are obtained via the linear MARS method
    by using all three mesh adjustments
    in Subsections \ref{subsec:elementary},
    \ref{subsec:relocate},
    and \ref{subsec:regenerate}, 
    while those in (a,b) by turning off VREM and LTR. 
    The minimum angles in (a) and (b) are respectively 10.3° and 14.1°
    while those in (c) and (d) are respectively 23.6° and 29.8°.
    The highlighted triangles in (a,b) 
    violate the $\theta$-regularity.
  } 
\end{figure}

The simulation results of our 3D MARS method
 are shown in \cref{fig:armadillo}, 
 where subplots (c--e) demonstrate
 significant distortions of the armadillo.
Nonetheless,
 the topological invariants of the armadillo
 and the $(r_{\mathrm{tiny}},h_{L},\theta)$-regularity
 in \Cref{tab:deformationTestSetup}
 are well preserved at each time step. 
In addition, the geometry of the final armadillo
 is visually indistinguishable from that of the initial condition, 
 cf. \cref{fig:armadillo}(a,f).

With both VREM and LTR turned off,
 we perform the armadillo test once again, 
 locate at $t=\frac{T}{2}$
 some ill-shaped triangles that violate the $\theta$-regularity, 
 and plot in \cref{fig:armadillocomparison} 
 two surface patches that contain some of these ill-shaped triangles.
Also displayed in \cref{fig:armadillocomparison}  
 are the corresponding results of our MARS method
 with both VREM and LTR turned on;
 they are subsets of the triangular mesh in \Cref{fig:armadillo}(e).
A comparison of the subplots in \cref{fig:armadillocomparison} 
 clearly demonstrates the efficacy of VREM and LTR: 
 without them the minimum angle is $10.3^{\circ}$
 while turning them on increases the minimum angle
 to $23.6^{\circ}$.

 \begin{table} 
   \centering
   \caption{Cost balance results of EMA, VREM, and LTR
     with $h_{L}=0.5h$
     for the vortical shear tests
     in \cref{tab:vorticalshearset}
     and the deformation tests
     in \cref{tab:deformationTestSetup}. 
   }
   \label{tab:prop}
   \small  
   \begin{tabular}{l|ccc}
  \hline\hline
  \multicolumn{4}{l}{(a): percentages 
  in terms of the number of adjusted triangles}\\
  \hline
   Test case & EMA & VREM & LTR\\
    \hline
  vortical shear of the sphere with $h=\frac{1}{32}$
  & 90.3\% &8.9\% &0.8\%  \\
    \hline
  vortical shear of the sphere with $h=\frac{1}{64}$
  & 90.8\% & 8.4\% &0.8\%\\
    \hline
  vortical shear of the sphere with $h=\frac{1}{128}$
  & 90.5\% & 8.5\% & 1.0\%\\
  \hline
  deformation of the sphere with $h=\frac{1}{64}$
  & 93.4\% & 6.0\% &0.6\%\\
    \hline
  deformation of the sphere with $h=\frac{1}{128}$
  & 93.4\% & 5.9\% & 0.7\%\\
  \hline
   deformation of the sphere with $h=\frac{1}{256}$
  & 93.3\% & 6.0\% & 0.7\%\\
  \hline
  deformation of the armadillo with $h=\frac{1}{128}$
   & 89.8\% & 7.9\% & 2.3\%\\
  \hline
  \hline
  \multicolumn{4}{l}{(b): percentages 
  in terms of consumed CPU time}\\
  \hline
  Test case & EMA & VREM & LTR\\
    \hline
  vortical shear of the sphere with $h=\frac{1}{32}$
  & 31.6\% &32.9\% &35.5\%  \\
    \hline
  vortical shear of the sphere with $h=\frac{1}{64}$
  & 32.3\% & 32.5\% &35.2\%\\
    \hline
  vortical shear of the sphere with $h=\frac{1}{128}$
  & 31.3\% & 33.0\% & 35.7\%\\
  \hline
  deformation of the sphere with $h=\frac{1}{64}$
  & 36.5\% & 30.8\% &32.7\%\\
    \hline
  deformation of the sphere with $h=\frac{1}{128}$
  & 36.6\% & 31.3\% & 32.1\%\\
  \hline
   deformation of the sphere with $h=\frac{1}{256}$
  & 35.5\% & 31.7\% & 32.8\%\\
  \hline
  deformation of the armadillo with $h=\frac{1}{128}$
   & 26.0\% & 27.4\% & 46.6\%\\
  \hline
  \hline 
\end{tabular}

 \end{table}

\subsection{Balancing costs
  of the three mesh adjustment algorithms}
\label{subsec:proportion}

A question that follows naturally from \cref{fig:armadillocomparison}
 is: how expensive are VREM and LTR?
To answer this question,
 we list in \cref{tab:prop} percentages of EMA, VREM, and LTR
 in terms of both the number of adjusted triangles
 and the consumed CPU time.

As shown in part (a) of \cref{tab:prop}, 
 about 90\% of ill-shaped triangles
 are handled by the three fast and simple operations in EMA
 while less than 2.5\% of them by 
 the most expensive and complex mesh adjustments in LTR. 
Although their combined percentage 
 is less than one ninth of the percentage of EMA,
 VREM and LTR are essential in ensuring 
 the $(r_{\mathrm{tiny}},h_{L},\theta)$-regularity
 of the evolving triangular mesh. 

In part (b) of \cref{tab:prop},
 the percentages of EMA, VREM, and LTR
 in terms of consumed CPU time
 are roughly the same for all sphere tests,
 indicating
 that the cost ratio of VREM--LTR
 over EMA is bounded by 2.2. 
For the armadillo test, 
 the ratio is about 3.
Therefore,
 the total cost of the sequence
 EMA--VREM--LTR is satisfactory to a high degree,  
 because the $(r_{\mathrm{tiny}},h_{L},\theta)$-regularity
 is fulfilled by a cost
 only several times greater than that of EMA.
As the key of this success,
 the first-applied EMA is the simplest/fastest
 and handles most ill-shaped triangles
 while the last-applied LTR, despite its complexity,
 is invoked on the smallest fraction of ill-shaped triangles.



\section{Conclusions}
\label{sec:conclusions}

For 3D explicit IT,
 we have developed a linear MARS method, 
 which is fundamentally different from all current IT methods
 in that topological and geometric problems
 are tackled by tools in topology and geometry. 
However complex it may be,
 any 3D continuum is uniquely represented by a poset of glued surfaces
 and each glued surface can be well approximated by a triangular mesh.
By designing two novel local mesh adjustment algorithms, 
 we maintain, at each time step, 
 the $(r_{\mathrm{tiny}},h_{L},\theta)$-regularity
 on the triangular meshes. 
Numerical results of benchmark tests demonstrate 
 the second- or third-order convergence rates of our method
 and its advantages
 in preserving topological structures and geometric features.

Future research prospects follow.
First, we plan to analyze the proposed method
 under the MARS framework.
Second, we will develop fourth- and higher-order 3D IT methods
 by representing the interface {topology} 
 with a triangular mesh
 and by approximating the interface {geometry}
 with spline surfaces.
Third, we will couple MARS methods 
 with high-order flow solvers
 to simulate flows with moving boundaries. 




\section*{Acknowledgments}
We acknowledge helpful comments
 from Junxiang Pan, Zhiqi Li, Chenhao Ye, Lei Pang, and Shuang Hu, 
 graduate students at the school of mathematical sciences
 in Zhejiang University.

\bibliographystyle{siamplain}
\bibliography{bib/MARS-3D-linear}

\begin{thebibliography}{10}

\bibitem{AhSh09}
{\sc H.~T. Ahn and M.~Shashkov}, {\em Adaptive moment-of-fluid method}, J.
  Comput. Phys., 228 (2009), pp.~2792--2821.
\newblock https://doi.org/10.1016/j.jcp.2008.12.031.

\bibitem{AsMe20}
{\sc A.~{Asuri Mukundan}, T.~Ménard, J.~C. {Brändle de Motta}, and
  A.~Berlemont}, {\em A 3d moment of fluid method for simulating complex
  turbulent multiphase flows}, Computers \& Fluids, 198 (2020).
\newblock https://doi.org/10.1016/j.compfluid.2019.104364.

\bibitem{MuMe22}
{\sc A.~{Asuri Mukundan}, T.~Ménard, J.~C. {Brändle de Motta}, and
  A.~Berlemont}, {\em A hybrid moment of fluid–level set framework for
  simulating primary atomization}, J. Comput. Phys., 451 (2022), p.~110864.
\newblock https://doi.org/10.1016/j.jcp.2021.110864.

\bibitem{AuMa07}
{\sc E.~Aulisa, S.~Manservisi, R.~Scardovelli, and S.~Zaleski}, {\em Interface
  reconstruction with least-squares fit and split advection in
  three-dimensional cartesian geometry}, J. Comput. Phys., 225 (2007),
  pp.~2301--2319.
\newblock https://doi.org/10.1016/j.jcp.2007.03.015.

\bibitem{Be92}
{\sc D.~J. Benson}, {\em Computational methods in {Lagrangian} and {Eulerian}
  hydrocodes}, Comput. Methods Appl. Mech. Eng., 99 (1992), pp.~235--394.
\newblock https://doi.org/10.1016/0045-7825(92)90042-I.

\bibitem{Be02}
{\sc D.~J. Benson}, {\em Volume of fluid interface reconstruction methods for
  multi-material problems}, Appl. Mech. Rev., 55 (2002), pp.~151--165.
\newblock https://doi.org/10.1115/1.1448524.

\bibitem{BeCh10}
{\sc M.~Berg, O.~Cheong, M.~Kreveld, and M.~Overmars}, {\em {Computational
  Geometry: Algorithms and Applications}}, Springer, Berlin, Heidelberg,
  3rd~ed., 2010.

\bibitem{BoSc22}
{\sc V.~Boniou, T.~Schmitt, and A.~Vié}, {\em Comparison of interface
  capturing methods for the simulation of two-phase flow in a unified
  {low-Mach} framework}, Int. J. Multiph. Flow, 149 (2022), p.~103957.
\newblock https://doi.org/10.1016/j.ijmultiphaseflow.2021.103957.

\bibitem{CoSp21}
{\sc R.~Comminal and J.~Spangenberg}, {\em Three-dimensional cellwise
  conservative unsplit geometric {VOF} schemes}, J. Comput. Phys., 442 (2021),
  p.~110479.
\newblock https://doi.org/10.1016/j.jcp.2021.110479.

\bibitem{JeRo15}
{\sc W.~C. de~Jesus, A.~M. Roma, M.~R. Pivello, M.~M. Villar, and
  A.~da~Silveira-Neto}, {\em A {3D} front-tracking approach for simulation of a
  two-phase fluid with insoluble surfactant}, J. Comput. Phys., 281 (2015),
  pp.~403--420.
\newblock https://doi.org/10.1016/j.jcp.2014.10.021.

\bibitem{DeJo20}
{\sc B.~Després and H.~Jourdren}, {\em {Machine} learning design of volume of
  fluid schemes for compressible flows}, J. Comput. Phys., 408 (2020),
  p.~109275.
\newblock https://doi.org/10.1016/j.jcp.2020.109275.

\bibitem{DoMo68}
{\sc P.~Doyle and D.~Moran}, {\em A short proof that compact 2-manifolds can be
  triangulated}, Invent Math, 5 (1968), pp.~160--162.
\newblock https://doi.org/10.1007/BF01425546.

\bibitem{GiFe18}
{\sc F.~Gibou, R.~Fedkiw, and S.~Osher}, {\em A review of level-set methods and
  some recent applications}, J. Comput. Phys., 353 (2018), pp.~82--109.
\newblock https://doi.org/10.1016/j.jcp.2017.10.006.

\bibitem{HiNi81}
{\sc C.~Hirt and B.~Nichols}, {\em Volume of fluid ({VOF}) method for the
  dynamics of free boundaries}, J. Comput. Phys., 39 (1981), pp.~201--225.
\newblock https://doi.org/10.1016/0021-9991(81)90145-5.

\bibitem{HuLi25}
{\sc D.~Hu, K.~Liang, L.~Ying, S.~Li, and Q.~Zhang}, {\em {ARMS: Adding} and
  removing markers on splines for high-order general interface tracking under
  the {MARS} framework}, J. Comput. Phys., 521 (2025), p.~113574.
\newblock https://doi.org/10.1016/j.jcp.2024.113574.

\bibitem{HuaSt08}
{\sc J.~Hua, J.~F. Stene, and P.~Lin}, {\em Numerical simulation of {3D}
  bubbles rising in viscous liquids using a front tracking method}, J. Comput.
  Phys., 227 (2008), pp.~3358--3382.
\newblock https://doi.org/10.1016/j.jcp.2007.12.002.

\bibitem{JuTr96}
{\sc D.~Juric and G.~Tryggvason}, {\em A front-tracking method for dendritic
  solidification}, J. Comput. Phys., 123 (1996), pp.~127--148.
\newblock https://doi.org/10.1006/jcph.1996.0011.

\bibitem{LeVe96}
{\sc R.~J. Leveque}, {\em High-resolution conservative algorithms for advection
  in incompressible flow}, SIAM J. Numer. Anal., 33 (1996), pp.~627--665.
\newblock https://doi.org/10.1137/0733033.

\bibitem{ZhQi}
{\sc H.~Liang, Y.~Qiu, Y.~Tan, and Q.~Zhang}, {\em On topology of
  three-dimensional continua with singular points}, 2025.
\newblock https://arxiv.org/abs/2512.02385.

\bibitem{LiRu06}
{\sc P.~Liovic, M.~Rudman, J.-L. Liow, D.~Lakehal, and D.~Kothe}, {\em A {3D}
  unsplit-advection volume tracking algorithm with planarity-preserving
  interface reconstruction}, Computers \& Fluids, 35 (2006), pp.~1011--1032.
\newblock https://doi.org/10.1016/j.compfluid.2005.09.003.

\bibitem{MiGi07}
{\sc C.~Min and F.~Gibou}, {\em Geometric integration over irregular domains
  with application to level-set methods}, J. Comput. Phys., 226 (2007),
  pp.~1432--1443.
\newblock https://doi.org/10.1016/j.jcp.2007.05.032.

\bibitem{MiGu16}
{\sc M.~Mirzadeh, A.~Guittet, C.~Burstedde, and F.~Gibou}, {\em Parallel
  level-set methods on adaptive tree-based grids}, J. Comput. Phys., 322
  (2016), pp.~345--364.
\newblock https://doi.org/10.1016/j.jcp.2016.06.017.

\bibitem{MoKe98}
{\sc M.~Moog, R.~Keck, and A.~Zemitis}, {\em Some numerical aspects of the
  level set method}, Math. Model. Anal., 3 (1998), pp.~140--151.
\newblock https://doi.org/10.3846/13926292.1998.9637097.

\bibitem{OlKr05}
{\sc E.~Olsson and G.~Kreiss}, {\em A conservative level set method for two
  phase flow}, J. Comput. Phys., 210 (2005), pp.~225--246.
\newblock https://doi.org/10.1016/j.jcp.2005.04.007.

\bibitem{OsFe03}
{\sc S.~Osher and R.~Fedkiw}, {\em {Level Set Methods} and {Dynamic Implicit
  Surfaces}}, Springer, New York, 1st~ed., 2003.
\newblock https://doi.org/10.1007/b98879.

\bibitem{OsSe88}
{\sc S.~Osher and J.~A. Sethian}, {\em Fronts propagating with
  curvature-dependent speed: {Algorithms} based on {Hamilton-Jacobi}
  formulations}, J. Comput. Phys., 79 (1988), pp.~12--49.
\newblock https://doi.org/10.1016/0021-9991(88)90002-2.

\bibitem{QiLu19}
{\sc Y.~Qi, J.~Lu, R.~Scardovelli, S.~Zaleski, and G.~Tryggvason}, {\em
  Computing curvature for volume of fluid methods using machine learning}, J.
  Comput. Phys., 377 (2019), pp.~155--161.
\newblock https://doi.org/10.1016/j.jcp.2018.10.037.

\bibitem{RiKo98}
{\sc W.~J. Rider and D.~B. Kothe}, {\em Reconstructing volume tracking}, J.
  Comput. Phys., 141 (1998), pp.~112--152.
\newblock https://doi.org/10.1006/jcph.1998.5906.

\bibitem{ScZa03}
{\sc R.~Scardovelli and S.~Zaleski}, {\em Interface reconstruction with
  least-square fit and split eulerian–lagrangian advection}, Int. J. Numer.
  Methods Fluids, 41 (2003), pp.~251--274.
\newblock https://doi.org/10.1002/fld.431.

\bibitem{ShLu18}
{\sc X.~Shang, Z.~Luo, E.~Y. Gatapova, O.~A. Kabov, and B.~Bai}, {\em
  {GNBC}-based front-tracking method for the three-dimensional simulation of
  droplet motion on a solid surface}, Computers \& Fluids, 172 (2018),
  pp.~181--195.
\newblock https://doi.org/10.1016/j.compfluid.2018.06.021.

\bibitem{SuFa98}
{\sc M.~Sussman, E.~Fatemi, P.~Smereka, and S.~Osher}, {\em An improved level
  set method for incompressible two-phase flows}, Computers \& Fluids, 27
  (1998), pp.~663--680.
\newblock https://doi.org/10.1016/S0045-7930(97)00053-4.

\bibitem{SuPu00}
{\sc M.~Sussman and E.~G. Puckett}, {\em A coupled level set and
  volume-of-fluid method for computing {3D} and axisymmetric incompressible
  two-phase flows}, J. Comput. Phys., 162 (2000), pp.~301--337.
\newblock https://doi.org/10.1006/jcph.2000.6537.

\bibitem{SuSm94}
{\sc M.~Sussman, P.~Smereka, and S.~Osher}, {\em A level set approach for
  computing solutions to incompressible two-phase flow}, J. Comput. Phys., 114
  (1994), pp.~146--159.
\newblock https://doi.org/10.1006/jcph.1994.1155.

\bibitem{TaQi25}
{\sc Y.~Tan, Y.~Qian, Z.~Li, and Q.~Zhang}, {\em The multiphase cubic {MARS}
  method for fourth- and higher-order interface tracking of two or more
  materials with arbitrarily complex topology and geometry}, 2025.
\newblock https://arxiv.org/abs/2506.11897.

\bibitem{StAr}
{\sc {The Stanford 3D Scanning Repository}}, {\em Armadillo dataset},
  \url{http://graphics.stanford.edu/data/3Dscanrep/}.

\bibitem{ThGi15}
{\sc M.~Theillard, F.~Gibou, and T.~Pollock}, {\em A sharp computational method
  for the simulation of the solidification of binary alloys}, J. Sci. Comput.,
  63 (2015), p.~330–354.
\newblock https://doi.org/10.1007/s10915-014-9895-0.

\bibitem{TrBu01}
{\sc G.~Tryggvason, B.~Bunner, A.~Esmaeeli, D.~Juric, N.~Al-Rawahi, W.~Tauber,
  J.~Han, S.~Nas, and Y.-J. Jan}, {\em A front-tracking method for the
  computations of multiphase flow}, J. Comput. Phys., 169 (2001), pp.~708--759.
\newblock https://doi.org/10.1006/jcph.2001.6726.

\bibitem{UnTr92}
{\sc S.~O. Unverdi and G.~Tryggvason}, {\em A front-tracking method for
  viscous, incompressible, multi-fluid flows}, J. Comput. Phys., 100 (1992),
  pp.~25--37.

\bibitem{WaYa12}
{\sc Z.~Wang, J.~Yang, and F.~Stern}, {\em A new volume-of-fluid method with a
  constructed distance function on general structured grids}, J. Comput. Phys.,
  231 (2012), pp.~3703--3722.

\bibitem{XiHo05}
{\sc F.~Xiao, Y.~Honma, and T.~Kono}, {\em A simple algebraic interface
  capturing scheme using hyperbolic tangent function}, Int. J. Numer. Methods
  Fluids, 48 (2005), pp.~1023--1040.

\bibitem{YaJa06}
{\sc X.~Yang, A.~J. James, J.~Lowengrub, X.~Zheng, and V.~Cristini}, {\em An
  adaptive coupled level-set/volume-of-fluid interface capturing method for
  unstructured triangular grids}, J. Comput. Phys., 217 (2006), pp.~364--394.

\bibitem{YoFr15}
{\sc H.~Young and R.~Freedman}, {\em {University Physics} with {Modern
  Physics}}, Pearson Education, London, 14th~ed., 2015.

\bibitem{zal79}
{\sc S.~T. Zalesak}, {\em Fully multi-dimensional flux corrected transport
  algorithms for fluid flow}, J. Comput. Phys., 31 (1979), pp.~335--362.
\newblock https://doi.org/10.1016/0021-9991(79)90051-2.

\bibitem{Zh18}
{\sc Q.~Zhang}, {\em Fourth- and higher-order interface tracking via mapping
  and adjusting regular semianalytic sets represented by cubic splines}, SIAM
  J. Sci. Comput., 40 (2018), pp.~A3755--A3788.
\newblock https://doi.org/10.1137/17M1149328.

\bibitem{ZhFo16}
{\sc Q.~Zhang and A.~Fogelson}, {\em {MARS}: An analytic framework of interface
  tracking via mapping and adjusting regular semialgebraic sets}, SIAM J.
  Numer. Anal., 54 (2016), pp.~530--560.

\bibitem{ZhLi20}
{\sc Q.~Zhang and Z.~Li}, {\em Boolean algebra of two-dimensional continua with
  arbitrarily complex topology}, Math. Comput., 89 (2020), pp.~2333--2364.
\newblock https://doi.org/10.1090/mcom/3539.

\end{thebibliography}
\end{document}